\documentclass[a4paper,abstracton]{scrartcl}

\usepackage[utf8]{inputenc}
\usepackage{amsmath}
\usepackage{amsthm}
\usepackage{amssymb}
\usepackage{geometry}
\usepackage{graphicx}
\usepackage{xcolor}
\usepackage{caption}
\usepackage{subcaption}
\usepackage{authblk}
\usepackage{enumitem}
\usepackage{bm}
\usepackage{algorithm}
\usepackage{algorithmic}

\newcommand{\cU}{\mathcal{U}}
\newcommand{\X}{\mathcal{X}}

\newtheorem{theorem}{Theorem}[section]
\newtheorem{lemma}[theorem]{Lemma}

\title{Constructing Data-Driven Budgeted Uncertainty Sets for Robust Optimization}
\author[1]{Jamie Fairbrother}
\author[2]{Marc Goerigk}
\author[2]{Qiongyan Zeng\footnote{Corresponding author. Email: \texttt{qiongyan.zeng@uni-passau.de}}}

\affil[1]{Lancaster University Management School, Lancaster, United Kingdom}
\affil[2]{Business Decisions and Data Science, University of Passau, Germany}
\date{}

\usepackage{hyperref}

\begin{document}

\maketitle

\begin{abstract}
In robust optimization, budgeted uncertainty sets offer a popular trade-off between modeling flexibility and computational tractability. Data that is observed in practice is often discrete in nature, which creates the need for tools to construct budgeted uncertainty sets based on observed data. This paper presents a principled approach for this purpose, which ensures that any solution found through the robust problem with respect to the constructed budgeted uncertainty set offers an approximation guarantee with respect to the optimal value of the problem with discrete data. Our approach is purely based on the geometry of data and can be applied to any nominal optimization problem. We discuss the problem of evaluating budgeted uncertainty sets, and how to construct them using exact and heuristic methods. In computational experiments, we show that it is possible to construct sets with little computation time that offer guarantees that are close to optimality.
\end{abstract}

\noindent\textbf{Keywords:} robust optimization; budgeted uncertainty; uncertainty set construction

\section{Introduction}

\subsection{Motivation}

Robust optimization is a widely studied method to handle uncertainty in optimization problems. Many different variants have been introduced; in this paper, we study the most basic version, which is also called the min-max approach. We consider an optimization problem
\[ \min_{\pmb{x}\in\X} \sum_{i\in[n]} c_i x_i, \]
where $\X\subseteq\mathbb{R}^n_+$ is the set of feasible solutions, and $\pmb{c}\in\mathbb{R}^n_+$ is a non-negative cost vector. We use the notation $[n]=\{1,\ldots,n\}$ to denote index sets. Of special interest is the special case when $\X$ models a combinatorial problem, i.e., when $\X\subseteq\{0,1\}^n$. Combinatorial problems of this type are well-understood (see, e.g., \cite{korte2008combinatorial}) and can often be solved in polynomial time for specific structures of feasible solutions. In the following, we refer to such problems as ``nominal''.

To derive a robust optimization model, it is assumed that some problem data is unknown at the point in time when this model is solved. In robust combinatorial optimization, it is most commonly assumed that the cost vector is uncertain (see, e.g., \cite{robook}), but a set of possible outcomes $\cU$, the uncertainty set, can be provided through data or expert opinion. The robust (min-max) problem is then to solve
\[ \min_{\pmb{x}\in\X} \max_{\pmb{c}\in\cU} \sum_{i\in[n]} c_i x_i, \]
which can be seen as an optimization problem between two players: The decision maker first chooses a solution $\pmb{x}\in\X$, then an adversary chooses a cost scenario $\pmb{c}\in\cU$ that gives the worst-case objective value of $\pmb{x}$.

The complexity of this robust problem now clearly not only depends on $\X$, but also on the structure of the uncertainty set $\cU$. A simple approach is to assume that $\cU$ is finite, that is, a list of possible outcomes for the cost vector is assumed. Formally, discrete uncertainty is defined as
\[ \cU = \{\pmb{c}^1,\ldots,\pmb{c}^N\}. \]
When working with real-world data, uncertainty sets are often discrete, e.g., historic observations of outcomes may provide a list of scenarios against we wish to protect in the future. A drawback of this approach is that all non-trivial combinatorial problems become NP-hard when using discrete uncertainty sets (see, e.g., the survey \cite{kasperski2016robust}).

A seminal breakthrough was achieved with the introduction of budgeted uncertainty sets in \cite{bertsimas2003robust,bertsimas2004price}. Here, we assume more structure in the uncertainty, while still retaining some degree of flexibility. Let us denote by $\Delta_\Gamma = \{ \pmb{\delta}\in[0,1]^n : \sum_{i\in[n]} \delta_i \le \Gamma\}$ vectors whose sum is bounded by some parameter $\Gamma\in[0,n]$. Furthermore, let us assume that cost vectors of lower bounds $\underline{\pmb{c}}\in\mathbb{R}^n_+$ and maximum deviations $\pmb{d}\in\mathbb{R}^n_+$ are known. The budgeted uncertainty is then defined as
\[ \cU' = \{ \pmb{c}\in\mathbb{R}^n : \exists \pmb{\delta}\in\Delta_\Gamma \text{ s.t. } c_i = \underline{c}_i + d_i\delta_i \} \]
Intuitively, budgeted sets assume that costs are at their lower bounds $\underline{c}_i$, unless they are attacked by the adversary, which can increase costs by up to $d_i$. The adversary has a budget of $\Gamma$ that can be used for these attacks. If $\Gamma=0$, then the adversary does not have a budget, and all costs are at their lower bounds $\underline{c}_i$. If $\Gamma=n$, every item can be attacked, and all costs are at their upper bounds $\underline{c}_i + d_i$.

Robust combinatorial problems with budgeted uncertainty have the key advantage that they can be solved in polynomial time, if the nominal problem can be solved in polynomial time. This is different to discrete uncertainty sets, which makes budgeted uncertainty a popular choice regarding the trade-off between model flexibility and complexity.

What is largely unknown, despite budgeted uncertainty sets being studied for well over 20 years now, is how to construct budgeted sets in a principled way from data. In the above introduction, we assumed that cost vectors $\underline{\pmb{c}}$ and $\pmb{d}$, as well as the budget $\Gamma$, are ``known''. In this paper, we strive to answer the question: Given a discrete set of data observations, how can we construct a corresponding budgeted set with performance guarantees?

\subsection{Literature}

Since their first inception in \cite{bertsimas2003robust}, budgeted sets in robust optimization have been intensely studied. To solve such problems, it is possible to dualize the inner maximization problem to derive a compact problem reformulation. This model can be decomposed into $O(n)$ subproblems of nominal type. In a series of papers \cite{park2007note,alvarez2013note,lee2014short} this dual enumeration approach has been further improved by reducing the number of subproblems that need to be studied.

While this enumeration approach is particularly attractive from the perspective of complexity analysis, it is not necessarily the most computationally efficient way to solve such problems in practice. Improvements have been achieved in this area as well, e.g. by using cutting planes \cite{fischetti2012cutting} or recycling inequalities \cite{busing2023recycling}. For specific problems, tailor-made solution methods can be studied as well, see, e.g., \cite{busing2011recoverable,monaci2013exact}

The simplicity of budgeted uncertainty sets has led to several applications as well, such as portfolio optimization \cite{bertsimas2008robust}, wine grape harvesting \cite{bohle2010robust}, or warehousing \cite{buckow2025integrated}.

The complexity of robust problems with budgeted uncertainty beyond the min-max setting has been scrutinized in several publications as well. Here, there can potentially be a difference between so-called continuous and discrete budgeted uncertainty variants, where the former assumes that $\pmb{\delta}\in[0,1]^n$, and the latter assumes $\pmb{\delta}\in\{0,1\}^n$. For multi-stage optimization problems, this results in a change of complexity, see \cite{chassein2018recoverable} and \cite{goerigk2022recoverable}. Indeed, recent research has derived meta-theorems that allow to classify the complexity of a wide range of multi-stage problems under budgeted uncertainty, see \cite{grune2025completeness}.

Regarding the general topic of uncertainty set construction, several learning-based approaches have been proposed in the recent past. By taking a set of discrete scenarios, an uncertainty set is constructed by trying to generalize the data to some possibly continuous set, e.g., a polyhedron. The aim is usually to represent the data well, e.g., by generating a polyhedron that is close to the data, while removing outliers. Such approaches have been based on support vector machines \cite{shang2017data}, principal component analysis \cite{cheramin2021data}, or neural networks \cite{goerigk2023data}. An alternative idea is to take the downstream solution into account, i.e., to construct uncertainty sets so that the resulting robust solution performs well, as proposed in \cite{wang2023learning}. Finally, it is also possible to compare different types of sets of different sizes to see which performs best out-of-sample, as done in \cite{chassein2019algorithms}.

Closely related to this idea of generalizing data is the setting of scenario reduction, where the aim is to reduce the cardinality of the given discrete set. This can be done through methods of machine learning as well, such as random forest classifiers in \cite{goerigk2024data}. A principled approach to represent a set of discrete scenarios by a single scenario was introduced in \cite{goerigk2019representative}. There, the guiding idea is to construct a scenario through an optimization model in a way that the resulting approximation guarantee is optimized, that is, when solving the nominal problem with respect to this single scenario, a guarantee is provided for the resulting solution regarding its performance in the original, robust problem. This approach was later extended to reductions from many scenarios to few (rather than just one) in \cite{goerigk2023optimal}, and was also extended to include consideration of the set of feasible solutions to achieve better approximation guarantees in \cite{fairbrother2024problem}. In the recent work \cite{grunau2025general}, the problem of approximating two-stage robust problems with uncertain righthand side under budgeted uncertainty is discussed. In their work, a finite set of polytopes is used. The technique to derive an approximation guarantee by comparing the geometry of sets is similar to ours and other previous papers such as \cite{bertsimas2011geometric} and \cite{ben2020tractable}.

To the best of our knowledge, no principled approach has been suggested for generating budgeted uncertainty sets from discrete data in a way that performance guarantees can be provided, which is the purpose of this paper.

\subsection{Our approach and contributions}

Given a discrete uncertainty set $\cU=\{\pmb{c}^1,\ldots,\pmb{c}^N\}$, we would like to find a budgeted uncertainty set $\cU'$ that approximates $\cU$.
To achieve this, we use the following guiding principle for constructing sets with approximation guarantees.
\begin{theorem}[\cite{fairbrother2024problem}]\label{th:approx}
Let $\cU$ and $\cU'$ be sets in $\mathbb{R}^n_+$ and $\X\subseteq\mathbb{R}^n_+$. Furthermore, let $\alpha,\beta\ge 0$ be such that:
\begin{enumerate}[label=(\roman*)]
\item For all $\pmb{c}'\in\cU'$ there exists $\pmb{c}\in \textup{conv}(\cU)$ such that $c'_i \le \alpha c_i$ for all $i\in[n]$.
\item For all $\pmb{c}\in\cU$ there exists $\pmb{c}'\in \textup{conv}(\cU')$ such that $c_i \le \beta c'_i$ for all $i\in[n]$.
\end{enumerate}
Then, solving the robust problem with respect to $\cU'$, that is,
\[ \min_{\pmb{x}\in\X} \max_{\pmb{c}\in\cU'} \sum_{i\in[n]} c_i x_i \]
gives a solution that is an $(\alpha\beta)$-approximation for the robust problem with respect to $\cU$.
\end{theorem}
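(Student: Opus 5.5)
Write $f(\pmb{x},S)=\sup_{\pmb{c}\in S}\sum_{i\in[n]} c_i x_i$ for the worst-case objective of $\pmb{x}$ over a set $S$. The plan is to compare $f(\cdot,\cU)$ and $f(\cdot,\cU')$ pointwise, showing that for every $\pmb{x}\in\X$
\[ f(\pmb{x},\cU') \le \alpha\, f(\pmb{x},\cU) \quad\text{and}\quad f(\pmb{x},\cU) \le \beta\, f(\pmb{x},\cU'), \]
and then to chain the two inequalities through an optimal solution of each problem.

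First I note that for a fixed $\pmb{x}$ the map $\pmb{c}\mapsto\sum_i c_i x_i$ is linear. Hence its supremum over $\textup{conv}(S)$ equals its supremum over $S$: any convex combination of points of $S$ has value at most the largest value among them. Next I use nonnegativity, $\pmb{x}\in\mathbb{R}^n_+$. Take any $\pmb{c}'\in\cU'$. By (i) there is $\pmb{c}\in\textup{conv}(\cU)$ with $c'_i\le\alpha c_i$ componentwise, and therefore
\[ \sum_i c'_i x_i \le \alpha\sum_i c_i x_i \le \alpha f(\pmb{x},\textup{conv}(\cU)) = \alpha f(\pmb{x},\cU). \]
Taking the supremum over $\pmb{c}'\in\cU'$ gives the first inequality. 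The second follows in the same way from (ii), using $\textup{conv}(\cU')$.

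Now let $\pmb{x}'$ be an optimal solution of the robust problem with respect to $\cU'$, and let $\pmb{x}^*$ be optimal with respect to $\cU$. If optima are not attained, I would run the same argument with an arbitrary $\pmb{x}\in\X$ in place of $\pmb{x}^*$ and then take the infimum. The chain is
\[ f(\pmb{x}',\cU) \le \beta f(\pmb{x}',\cU') \le \beta f(\pmb{x}^*,\cU') \le \alpha\beta f(\pmb{x}^*,\cU). \]
This is exactly the $(\alpha\beta)$-approximation claim.

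There is no real obstacle here. The only points needing care are the use of $x_i\ge 0$ to transfer the componentwise bounds to the objective, and the reduction from $\textup{conv}$ to the set itself via linearity. If some supremum is infinite, I would note that the inequalities hold trivially in the extended reals.
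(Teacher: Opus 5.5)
Your proof is correct: the pointwise bounds $f(\pmb{x},\cU')\le\alpha f(\pmb{x},\cU)$ and $f(\pmb{x},\cU)\le\beta f(\pmb{x},\cU')$ follow from $\pmb{x}\ge\pmb{0}$, $\alpha,\beta\ge 0$, and the fact that a linear function has the same supremum over a set as over its convex hull, and chaining them through the optimality of $\pmb{x}'$ gives exactly the $(\alpha\beta)$ guarantee. The paper does not prove this theorem itself but quotes it from \cite{fairbrother2024problem}, so there is no in-paper proof to compare against; your direct argument is a complete proof of the statement as quoted.
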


Whenever we refer to calculating $\alpha$ or $\beta$, we mean the smallest such values for given sets $\cU$ and $\cU'$. We apply this idea to $\cU$ being the discrete set, and $\cU'$ being a (continuous) budgeted uncertainty set, which means that $\textup{conv}(\cU') = \cU'$. In Figure~\ref{fig:example}, we illustrate this principle. We indicate the discrete set $\cU$ by black dots, and the budgeted set $\cU'$ by the blue area. The first condition of Theorem~\ref{th:approx} requires that the discrete set is scaled by a factor $\alpha$ (which gives the red dots in Figure~\ref{fig:ex1}) so that all points in the blue area are dominated by one of the red dots. The second condition works analogously, but interchanges the two sets under consideration. That is, we use a scaling factor $\beta$ to find the red area in Figure~\ref{fig:ex2}, so that each of the black dots is dominated by some point in the red area.

\begin{figure}[htbp]
    \centering
    \begin{subfigure}{0.48\textwidth}
        \includegraphics[width=\textwidth]{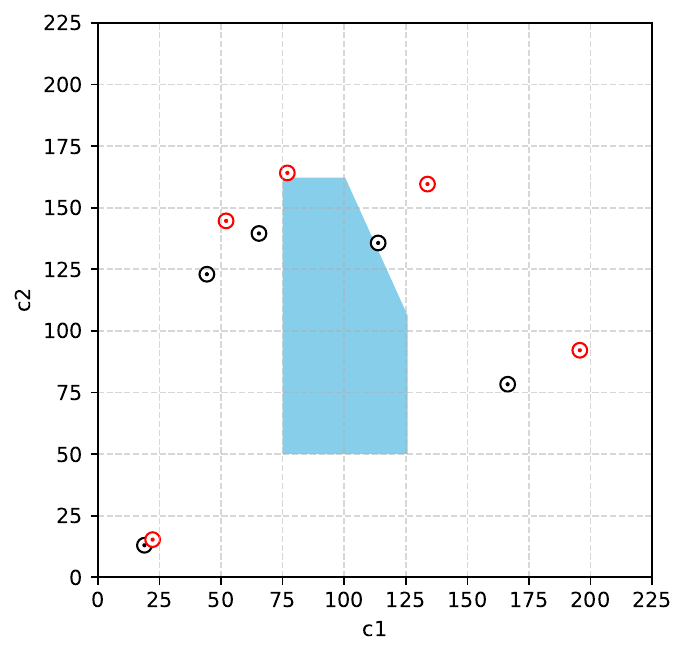}
        \caption{$\alpha$-scaled}
        \label{fig:ex1}
    \end{subfigure}
    \hfill
    \begin{subfigure}{0.48\textwidth}
        \includegraphics[width=\textwidth]{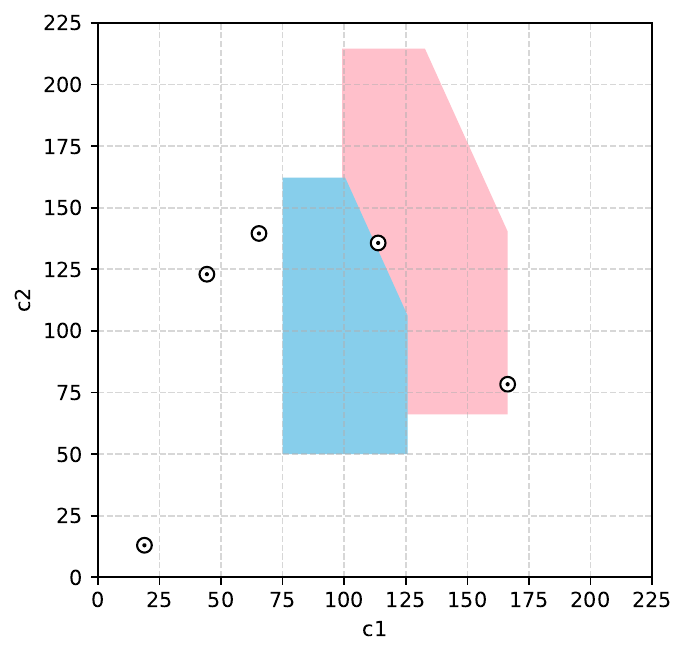}
        \caption{$\beta$-scaled}
        \label{fig:ex2}
    \end{subfigure}
\caption{{The $\alpha$-scaled and $\beta$-scaled of discrete sets and budgeted sets.}}
    \label{fig:example}
\end{figure}

The resulting approximation guarantee is $\alpha\cdot\beta$, where in this case, $\alpha=1.18$ and $\beta=1.32$. This means that if we are actually interested in solving a robust problem with respect to the discrete set $\cU$, and we solve a robust problem with respect to the budgeted set $\cU'$ instead, the solution we find has at most $\alpha\beta = 1.56$ times the objective value with respect to $\cU$ than any optimal solution with respect to $\cU$. Note that this holds independent of $\X$, as long as all solutions are non-negative.

In this paper, we consider the problem of constructing a budgeted set that optimizes this performance guarantee $\alpha\cdot\beta$. To this end, we first discuss how to calculate this guarantee for given sets in Section~\ref{sec:eval}. We show that the value of $\alpha$ is NP-hard to calculate in Section~\ref{sec:evalalpha}, while it is possible to find the value of $\beta$ in polynomial time, which we prove in Section~\ref{sec:evalbeta}. We then turn to the problem of finding $\cU'$ that optimizes the guarantee in Section~\ref{sec:optimize}. We provide an exact approach that is based on column-and-constraint generation in Section~\ref{sec:exact}, and discuss heuristics for the problem in Section~\ref{sec:heuristics}. For some approaches, an a-priori bound on the resulting guarantee can be provided. Our methods are tested computationally in Section~\ref{sec:experiments}, where we find that heuristics can achieve small approximation guarantees using little computation time. We conclude the paper with a short summary and pointing out further research questions in Section~\ref{sec:conclusions}. In Appendix~\ref{app:discrete}, we additionally discuss how our setting can be used to approximate a given budgeted uncertainty set by a discrete set.

\section{Evaluating budgeted sets}
\label{sec:eval}

Before we consider the optimization problem of constructing budgeted uncertainty sets in the next section, we begin with its evaluation problem, that is:
For a given discrete uncertainty set $\cU=\{\pmb{c}^1,\ldots,\pmb{c}^N\}$ and a given budgeted uncertainty set $\cU'= \{ \pmb{c}\in\mathbb{R}^n : \exists \pmb{\delta}\in\Delta_\Gamma \text{ s.t. } c_i = \underline{c}_i + d_i\delta_i \}$, how can we calculate the values of $\alpha$ and $\beta$ according to Theorem~\ref{th:approx}? We begin with the calculation of $\alpha$, and then consider the calculation of $\beta$.

\subsection{Calculating $\alpha$}
\label{sec:evalalpha}

We would like to solve the following problem:
\[ \min\ \alpha \text{ s.t. } \forall \pmb{c}'\in\cU'\ \exists \pmb{c}\in\textup{conv}(\cU)\ \forall i\in[n]\ c'_i \le \alpha c_i \]
To treat the multi-stage structure resulting from the alternating sequence of quantifier, we begin with the innermost problem and use linear programming duality. As $c'_i \le \alpha c_i$ is equivalent to $1/\alpha \cdot c'_i \le c_i$, let us introduce $t=1/\alpha$. Let us assume that some $\pmb{c}'\in\cU'$ is given. Then, we can solve the following problem to find the largest $t$ (i.e., the smallest $\alpha$) such that $\exists \pmb{c}\in\textup{conv}(\cU)\ \forall i\in[n]\ c'_i \le \alpha c_i$ holds:
\begin{align*}
\max\ & t \\
\text{s.t. } & tc'_i \le \sum_{j\in[N]} \lambda_j c^j_i  & \forall i\in[n] \\
& \sum_{j\in[N]} \lambda_j = 1 \\
& \lambda_j \in [0,1] & \forall j\in[N] \\
& t \ge 0
\end{align*}
We dualize the problem and use the definition of the budgeted uncertainty set $\cU'$ to find the following optimization problem:
\begin{align*}
\min\ & \pi  \tag{$\alpha$-Nonlinear}\label{a-nl}\\
\text{s.t. } & \sum_{i\in[n]} (\underline{c}_i + d_i \delta_i) p_i \ge 1 \\
& \pi \ge \sum_{i\in[n]} c^j_i p_i & \forall j\in[N] \\
& \pmb{\delta}\in\Delta_\Gamma \\
& p_i \ge 0 & \forall i\in[n] \\
& \pi \ge 0
\end{align*}
The optimal objective value of this problem thus provides the value $1/\alpha$. Note that it is nonlinear, due to the product $\delta_ip_i$. We show that it is possible to linearize this. Let us assume that both $\pi$ and $\pmb{p}$ are fixed. The remaining problem is then only to check if there exists $\pmb{\delta}\in\Delta_\Gamma$ such that:
\[
\sum_{i\in[n]} (\underline{c}_i + d_i \delta_i) p_i \ge 1
\]
To check if such $\pmb{\delta}$ exists, we simply sort the values $p_id_i$ in descending order and distribute $\delta_i$ in this order. Hence there is a solution where all $\delta_i$ are binary, except for at most one such value, which is equal to $\Gamma-\lfloor\Gamma\rfloor$, if $\Gamma$ is not an integer.

We thus have the following.
\begin{lemma}
Problem \eqref{a-nl} is equivalent to
\begin{align*}
\min\ & \pi  \tag{$\alpha$-Linear-Frac}\\
\text{s.t. } & \sum_{i\in[n]} (\underline{c}_i + d_i \delta_i + d_i (\Gamma-\lfloor \Gamma \rfloor) \hat{\delta}_i) p_i \ge 1 \\
& \pi \ge \sum_{i\in[n]} c^j_i p_i & \forall j\in[N] \\
& \sum_{i\in[n]} \delta_i \le \lfloor\Gamma\rfloor \\
& \sum_{i\in[n]} \hat{\delta_i} \le 1 \\
& \delta_i,\hat{\delta}_i \in \{0,1\} & \forall i\in[n] \\
& p_i \ge 0& \forall i\in[n] \\
& \pi \ge 0
\end{align*}
If $\Gamma$ is integer, problem \eqref{a-nl} is equivalent to
\begin{align*}
\min\ & \pi  \tag{$\alpha$-Linear-Int}\\
\text{s.t. } & \sum_{i\in[n]} (\underline{c}_i + d_i \delta_i) p_i \ge 1 \\
& \pi \ge \sum_{i\in[n]} c^j_i p_i & \forall j\in[N] \\
& \sum_{i\in[n]} \delta_i \le \Gamma \\
& \delta_i \in \{0,1\} & \forall i\in[n] \\
& p_i \ge 0 & \forall i\in[n] \\
& \pi \ge 0
\end{align*}
\end{lemma}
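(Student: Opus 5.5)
The plan is to show that the three problems share the same feasible set of $(\pi,\pmb{p})$ pairs once $\pmb{\delta}$ is projected out. Then the optimal values coincide, and optimal solutions can be translated between them. So I will fix $(\pi,\pmb{p})$ with $\pmb{p}\ge 0$, $\pi\ge 0$ and $\pi\ge\sum_i c^j_ip_i$ for all $j$. I then prove that
\[ \max_{\pmb{\delta}\in\Delta_\Gamma}\sum_{i\in[n]} d_ip_i\delta_i \]
equals the maximum of $\sum_i d_ip_i(\delta_i+(\Gamma-\lfloor\Gamma\rfloor)\hat\delta_i)$ over binary $\pmb{\delta},\hat{\pmb{\delta}}$ satisfying the two cardinality constraints. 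Since the covering constraint only asks that $\sum_i\underline{c}_ip_i$ plus this quantity be at least $1$, equality of the two maxima shows that $(\pi,\pmb{p})$ is feasible in one problem if and only if it is feasible in the other.

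\textbf{Binary to continuous.} Take feasible binary $\pmb{\delta},\hat{\pmb{\delta}}$ and set $\delta'_i=\min\{1,\delta_i+(\Gamma-\lfloor\Gamma\rfloor)\hat\delta_i\}$. Then $\delta'\in[0,1]^n$ and $\sum_i\delta'_i\le\lfloor\Gamma\rfloor+(\Gamma-\lfloor\Gamma\rfloor)=\Gamma$, so $\delta'\in\Delta_\Gamma$. The objective may drop when the truncation bites, i.e.\ when $\delta_i=\hat\delta_i=1$ for the same $i$. In that case I instead move the fractional amount $\Gamma-\lfloor\Gamma\rfloor$ to an index $k$ with $\delta_k=0$. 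Such a $k$ exists when $\lfloor\Gamma\rfloor<n$, and when $\lfloor\Gamma\rfloor=n$ we have $\Gamma=n$ and the fractional term is zero. Moving the mass to $k$ loses the term $d_ip_i(\Gamma-\lfloor\Gamma\rfloor)$. It is cleaner to avoid this by working with maximizers: in an optimal binary pair one may assume $\hat\delta$ is supported outside $\mathrm{supp}(\delta)$, or else the fractional value is dominated. I will therefore argue via the sorted greedy solution on both sides, not pointwise.

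\textbf{Continuous to binary, the core step.} Maximizing $\sum_i (d_ip_i)\delta_i$ over $\Delta_\Gamma$ is a fractional knapsack problem with unit weights and nonnegative profits $w_i=d_ip_i$. Sort the indices so that $w_{(1)}\ge\dots\ge w_{(n)}$. An exchange argument then shows that the greedy vector is optimal; it sets $\delta_{(1)}=\dots=\delta_{(\lfloor\Gamma\rfloor)}=1$, $\delta_{(\lfloor\Gamma\rfloor+1)}=\Gamma-\lfloor\Gamma\rfloor$ (when that index exists) and the rest to $0$. Alternatively, $\Delta_\Gamma$ is a polytope whose vertices have at most one fractional coordinate, because the constraint matrix is an interval matrix. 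The greedy vector is exactly $\delta_i+(\Gamma-\lfloor\Gamma\rfloor)\hat\delta_i$ with disjoint binary supports, which gives ``continuous $\le$ binary''. For ``binary $\le$ continuous'', the binary objective is at most $\sum$ of the top $\lfloor\Gamma\rfloor$ weights plus $(\Gamma-\lfloor\Gamma\rfloor)$ times the next weight. The reason is that any feasible binary pair uses at most $\lfloor\Gamma\rfloor$ full weights plus one fractional weight, and if $\hat\delta$ overlaps $\delta$ the value is still bounded by the top-weight total with multiplicity at most $1+(\Gamma-\lfloor\Gamma\rfloor)$ on one index. This overlap case is the main obstacle. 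I would handle it by bounding the overlap: $w_i(1+f)\le w_{(1)}+\dots$. Since overlap lets one index count more than once, I must check that it cannot beat greedy. If it can, I would add the harmless constraint $\delta_i+\hat\delta_i\le1$ or note that the intended reading has disjoint supports, and verify that the paper's formulation tolerates this.

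\textbf{Integer $\Gamma$.} When $\Gamma$ is integer the fractional part vanishes, so the $\hat\delta$ terms drop out and ($\alpha$-Linear-Frac) reduces directly to ($\alpha$-Linear-Int). The greedy argument above then gives binary optimal $\pmb{\delta}$ with $\sum_i\delta_i\le\Gamma$ directly.
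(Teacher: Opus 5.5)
Your continuous-to-binary direction (sort $w_i=d_ip_i$, fill greedily, at most one entry equal to $f=\Gamma-\lfloor\Gamma\rfloor$) is exactly the paper's argument and is correct. The gap is the converse. You located it, but then resolved it the wrong way. Two of your assertions are false. The first is that an optimal binary pair may be taken with $\hat{\pmb{\delta}}$ disjoint from $\pmb{\delta}$ ``or else the fractional value is dominated''. The second is that the binary objective never exceeds the greedy value.

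Nothing in ($\alpha$-Linear-Frac) forbids $\delta_i=\hat\delta_i=1$, and a maximizer actually prefers this. It puts $\hat{\pmb{\delta}}$ on the heaviest index, which is already in $\mathrm{supp}(\pmb{\delta})$, and obtains $w_{(1)}+\dots+w_{(\lfloor\Gamma\rfloor)}+f\,w_{(1)}$. This is strictly more than the greedy value whenever $\lfloor\Gamma\rfloor\ge1$, $f>0$ and $w_{(1)}>w_{(\lfloor\Gamma\rfloor+1)}$.

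This changes the optimum. Take $n=2$, $N=1$, $\pmb{c}^1=(1,1)$, $\underline{\pmb{c}}=\pmb{0}$, $\pmb{d}=(1,1)$, $\Gamma=3/2$. In \eqref{a-nl} we have $1\le\sum_i\delta_ip_i\le\sum_ip_i\le\pi$, so the optimum is $1$. This is correct, since $\alpha=1$: every point of $\cU'$ is dominated by $\pmb{c}^1$. However, $\delta_1=\hat\delta_1=1$, $\pmb{p}=(2/3,0)$, $\pi=2/3$ is feasible in ($\alpha$-Linear-Frac). So the first equivalence is false as literally stated. ($\alpha$-Linear-Frac) is only a relaxation of \eqref{a-nl}: its optimal value is at most $1/\alpha$, so its reciprocal only bounds $\alpha$ from above. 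The paper's sorting argument likewise justifies only that direction.

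Consequently, the constraint $\delta_i+\hat\delta_i\le1$ for all $i\in[n]$, which you offer as a ``harmless'' fallback, is in fact necessary. With it added, your proof closes without any truncation or mass-shifting. For disjoint binary supports, $\pmb{\delta}+f\hat{\pmb{\delta}}$ has entries in $\{0,f,1\}$ and coordinate sum at most $\lfloor\Gamma\rfloor+f=\Gamma$. So it lies in $\Delta_\Gamma$ with the same objective value, and the greedy vector gives the reverse inequality. Your treatment of integer $\Gamma$ is fine as is: $f=0$ removes the overlap issue, and every binary $\pmb{\delta}$ with $\sum_i\delta_i\le\Gamma$ already lies in $\Delta_\Gamma$.
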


From the discretization of $\pmb{\delta}$, we immediately obtain the following insight.
\begin{lemma}
The $\alpha$-problem can be solved in $O(n^{\lceil \Gamma \rceil})$. In particular, if $\Gamma$ is a constant value, the $\alpha$-problem can be solved in polynomial time.
\end{lemma}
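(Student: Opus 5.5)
The plan is to enumerate the discrete choices of $\pmb{\delta}$ (and $\hat{\pmb{\delta}}$) in the preceding lemma and solve a linear program for each. Once these binary vectors are fixed, ($\alpha$-Linear-Frac) or ($\alpha$-Linear-Int) becomes a linear program in $(\pi,\pmb{p})$. Its size is polynomial in $n$ and $N$, so it is solvable in polynomial time. The minimum over all fixings equals the optimal value of \eqref{a-nl}, which gives $1/\alpha$.

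The first step is to bound the number of fixings.

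In the integer case, the preceding lemma lets us take $\pmb{\delta}\in\{0,1\}^n$ with $\sum_i \delta_i\le\Gamma$. We may restrict to $\sum_i\delta_i=\Gamma$, since raising a $\delta_i$ from $0$ to $1$ only relaxes the first constraint, because $d_ip_i\ge 0$. This leaves $\binom{n}{\Gamma}=O(n^{\Gamma})$ vectors.

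In the fractional case, write $k=\lfloor\Gamma\rfloor$. We pick a set of size $k$ for $\pmb{\delta}$ and one index for $\hat{\pmb{\delta}}$. This gives at most $\binom{n}{k}\cdot n=O(n^{k+1})=O(n^{\lceil\Gamma\rceil})$ choices.

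Two details need checking here. First, if $\hat\delta_i=\delta_i=1$ for the same $i$, the coefficient becomes $d_i(1+\Gamma-k)$, which exceeds the budget. So $\hat{\pmb{\delta}}$ should be placed on an index outside the support of $\pmb{\delta}$. This is exactly what the greedy argument preceding the lemma produces, so we enumerate only such pairs. If $k=n$, then $\Gamma=n$ is integral and falls under the integer case.

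Second, the enumeration must not lose optimality. For every feasible $(\pi,\pmb{p},\pmb{\delta})$ of \eqref{a-nl}, the greedy sort of the values $p_id_i$ yields a vector in our enumerated family that satisfies the first constraint with the same $(\pi,\pmb{p})$. Conversely, every enumerated vector lies in $\Delta_\Gamma$.

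The total running time is therefore $O(n^{\lceil\Gamma\rceil})$ LP solves, each polynomial in $n$, $N$ and the input encoding. The stated $O(n^{\lceil\Gamma\rceil})$ should be read as counting subproblems, with polynomial-time LP solving as a factor. For constant $\Gamma$ this is polynomial.

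The main obstacle is the bookkeeping in the fractional case: making sure the enumeration of $(\pmb{\delta},\hat{\pmb{\delta}})$ with disjoint supports is both complete and valid, so that the minimum over subproblems is exactly the value of \eqref{a-nl}. The complexity count itself is routine.
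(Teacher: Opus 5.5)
Your proposal is correct and follows the paper's proof: the paper also enumerates the binary choices of $\pmb{\delta}$ (and $\hat{\pmb{\delta}}$), so that \eqref{a-nl} decomposes into polynomially many linear programs. Your restriction to disjoint supports is a real correction rather than pedantry, because the paper's ($\alpha$-Linear-Frac) omits the constraint $\delta_i+\hat{\delta}_i\le 1$; allowing $\delta_i=\hat{\delta}_i=1$ effectively sets a coordinate to $1+\Gamma-\lfloor\Gamma\rfloor>1$, which can strictly lower $\pi$ (and so overstate $\alpha$), so your enumeration is the version that makes the decomposition exact.
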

\begin{proof}
By enumerating all values for $\pmb{\delta}$ (and $\hat{\pmb{\delta}}$, if $\Gamma$ is not integer), model \eqref{a-nl} decomposes into linear programs, which can be solved in polynomial time.
\end{proof}

Observe that problem~\eqref{a-nl} is infeasible, if $\underline{c}_i=d_i=0$ for all $i\in[n]$. In the following, let us assume that this is not the case and thus $\pi>0$ (which means that $\alpha$ is finite) in an optimal solution. By substituting $\tilde{p}_i = p_i / \pi$, we obtain the following reformulation:
\begin{align*}
\min\ & \pi \\
\text{s.t. } & \sum_{i\in[n]} (\underline{c}_i + d_i\delta_i) \tilde{p}_i \ge 1/\pi \\
& 1 \ge \sum_{i\in[n]} c^j_i \tilde{p}_i & \forall j\in[N] \\
& \tilde{\pmb{p}}\ge\pmb{0}, \pmb{\delta}\in \Delta_\Gamma
\end{align*}
Therefore, problem~\eqref{a-nl} is equivalent to the following disjoint bilinear optimization problem:
\begin{align*}
\max\ & \sum_{i\in[n]} (\underline{c}_i + d_i\delta_i) \tilde{p}_i  \tag{$\alpha$-BL}\label{a-bl}\\
\text{s.t. } & 1 \ge \sum_{i\in[n]} c^j_i \tilde{p}_i & \forall j\in[N] \\
& \tilde{\pmb{p}}\ge\pmb{0}, \pmb{\delta}\in \Delta_\Gamma
\end{align*}
As an immediate consequence, we obtain the following enumeration result.
\begin{lemma}
The $\alpha$-problem can be solved in $O(n^N)$. In particular, if $N$ is a constant value, the $\alpha$-problem can be solved in polynomial time.
\end{lemma}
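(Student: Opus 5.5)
The plan is to use the bilinear reformulation \eqref{a-bl}. For fixed $\pmb{\delta}$, it is a linear program over the polyhedron
\[ P=\Big\{\tilde{\pmb{p}}\ge \pmb{0} : \sum_{i\in[n]} c^j_i\tilde{p}_i\le 1\ \forall j\in[N]\Big\}. \]
For fixed $\tilde{\pmb{p}}$, it is a linear program over $\Delta_\Gamma$. Since the problem is disjoint bilinear, an optimal solution exists with $\tilde{\pmb{p}}$ at a vertex of $P$, provided the optimum is finite. To see this, write the objective as $\max_{\tilde{\pmb{p}}\in P} f(\tilde{\pmb{p}})$ with
\[ f(\tilde{\pmb{p}})=\max_{\pmb{\delta}\in\Delta_\Gamma}\sum_{i\in[n]}(\underline{c}_i+d_i\delta_i)\tilde{p}_i . \]
This $f$ is a pointwise maximum of linear functions, hence convex, and a convex function on a polyhedron attains its maximum (if bounded) at a vertex. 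Unboundedness is excluded by the standing assumption that $\alpha$ is finite, i.e.\ $\pi>0$. If $P$ is unbounded, I would argue that the objective is bounded on $P$ whenever $\alpha$ is finite, so some vertex is still optimal. $P$ is pointed since it lies in $\mathbb{R}^n_+$.

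The key step is to bound the number of vertices of $P$ by $O(n^N)$. A vertex of $P\subseteq\mathbb{R}^n$ is determined by $n$ linearly independent active constraints, drawn from the $N$ scenario constraints and the $n$ nonnegativity constraints. At most $N$ of them can be scenario constraints, so at least $n-N$ coordinates are zero. Hence the support $S$ of $\tilde{\pmb{p}}$ has $|S|\le N$. Equivalently, a vertex is obtained by choosing a set $S$ of at most $N$ nonzero coordinates and a set of $|S|$ scenario constraints to make tight, and then solving a linear system. The number of supports is $\sum_{k\le N}\binom{n}{k}=O(n^N)$ for constant $N$.

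Rather than enumerating vertices explicitly, I would enumerate supports $S\subseteq[n]$ with $|S|\le N$. For each $S$, I would solve \eqref{a-bl} restricted to $\tilde{p}_i=0$ for $i\notin S$. Then only the variables $\delta_i$ with $i\in S$ matter. By the sorting argument preceding the first lemma of this section, we may take $\delta$ binary except for one fractional entry. So for each $S$ we enumerate the at most $O(|S|^2)=O(N^2)$ patterns of $\delta$ (which indices in $S$ get $1$ and which gets the fractional remainder $\Gamma-\lfloor\Gamma\rfloor$). For each pattern we solve an LP in $\tilde{\pmb{p}}$, which takes polynomial time. Taking the best value over all supports and patterns gives the optimum of \eqref{a-bl}, hence $1/\alpha$. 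The total is $O(n^N)$ LP solves times a factor depending only on $N$, which is polynomial for constant $N$.

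The main obstacle is justifying the vertex (support) reduction, namely the attainment of the maximum at a vertex when $P$ may be unbounded. I would handle it through the convexity of $f$ and the finiteness of $\alpha$. A cleaner alternative is to enumerate directly over bases: choose $|S|$ scenario rows and $S$, and solve the resulting square system. Degenerate cases are absorbed because every vertex arises from some such choice.
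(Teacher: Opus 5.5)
Your proposal is correct and follows the paper's proof: reduce to \eqref{a-bl}, use disjoint bilinearity to put $\tilde{\pmb p}$ at a vertex of the polyhedron defined by the $N$ scenario constraints, and count $O(n^N)$ such vertices. Your support-size argument, the convexity justification when $P$ is unbounded, and the support-plus-pattern LP enumeration simply spell out the paper's one-line argument in more detail.

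Two small slips, neither of which affects the result. First, the number of $\pmb\delta$ patterns per support is not $O(|S|^2)$ but can be up to $2^{|S|}(|S|+1)\le 2^N(N+1)$, because which indices receive $\delta_i=1$ depends on the order of the values $d_i\tilde p_i$, and that order is not known in advance. This is still independent of $n$, so the bound survives. Second, the optimal value of \eqref{a-bl} is $\alpha$ itself, not $1/\alpha$.
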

\begin{proof}
As problem~\eqref{a-bl} is disjoint bilinear, there exists an optimal solution where both $\tilde{\pmb{p}}$ and $\pmb{\delta}$ are basic solutions. As there are $n$ variables $\tilde{p}_i$, and $N$ linear constraints on these variables, there are at most $O(n^N)$ basic solutions.
\end{proof}

So far, we have identified two special cases where it is possible to calculate the value $\alpha$ in polynomial time. We now give a negative result, which shows that these cases cannot extend to general $\Gamma$ or $N$, unless P=NP.

\begin{theorem}\label{th:hardness}
The $\alpha$-problem is NP-hard.
\end{theorem}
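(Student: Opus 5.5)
Consider the bilinear form \eqref{a-bl}. For fixed $\tilde{\pmb{p}}$ the inner maximization over $\pmb{\delta}\in\Delta_\Gamma$ is easy, so the problem is to maximize over the polyhedron $P=\{\tilde{\pmb{p}}\ge 0: \sum_i c^j_i\tilde p_i\le 1\ \forall j\}$ the convex function
\[ \underline{\pmb{c}}^\top\tilde{\pmb{p}} + \text{(sum of the $\Gamma$ largest values of } d_i\tilde p_i). \]
Maximizing a convex function over a polytope is hard in general, and we embed a known NP-hard problem in this form. A natural choice is the following: $\max \|\pmb{p}\|_1$ or $\|\pmb{p}\|_\infty$ over polytopes is easy, but maximizing a sum-of-top-$\Gamma$ norm is not. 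Because the scenarios $\pmb{c}^j$ are non-negative, $P$ is a down-closed polytope, so the source problem must produce such polytopes.

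The reduction I would use is from \textsc{Independent Set} (or \textsc{Clique}). Given a graph $G=(V,E)$ with $n=|V|$ and an integer $k$, set $\underline{\pmb{c}}=\pmb{0}$, $\pmb{d}=\pmb{1}$, $\Gamma=k$, and create one scenario $\pmb{c}^e=\pmb{e}_u+\pmb{e}_v$ for each edge $e=\{u,v\}$, together with the unit scenarios $\pmb{e}_i$ for all $i\in V$. Then $P=\{\tilde{\pmb{p}}\in[0,1]^n:\tilde p_u+\tilde p_v\le 1\ \forall\{u,v\}\in E\}$ is the fractional stable set polytope. The objective is to choose $k$ coordinates $S$ and maximize $\sum_{i\in S}\tilde p_i$. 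If $G$ has an independent set $S$ of size $k$, its indicator vector gives value $k$. Conversely, value $k$ forces $\tilde p_i=1$ on $k$ coordinates, and these coordinates must be pairwise non-adjacent. Hence the optimum equals $k$ if and only if an independent set of size $k$ exists; otherwise the optimum is strictly less than $k$. Since the optimal value of \eqref{a-bl} is $1/\pi^*$, the optimal value of \eqref{a-nl} is $\pi^*=1/\alpha$, and deciding whether $\alpha\ge k$ (equivalently $1/\alpha\le 1/k$) would decide the independent set question.

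Two checks remain: the non-degeneracy assumption $\pi>0$ holds because $\pmb{d}\neq\pmb{0}$, and the reduction is polynomial, with $N=|E|+n$ scenarios and $\Gamma=k$ part of the input. This is consistent with the earlier polynomial cases, since both $N$ and $\Gamma$ grow with the instance.

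The main obstacle is the converse direction when the optimum is close to $k$. We need a strict gap, or at least that the optimum equals $k$ exactly only in yes-instances. Half-integrality of the fractional stable set polytope helps here: an optimal vertex has entries in $\{0,\tfrac12,1\}$. Therefore, if no independent $k$-set exists, the top $k$ entries include at least one value at most $\tfrac12$, and the optimum is at most $k-\tfrac12$. This gives an exact decision threshold, and one with polynomial encoding size, so the hardness statement also holds for approximate computation of $\alpha$ to within sufficient precision.
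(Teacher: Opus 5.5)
Your reduction is correct, and it takes a genuinely different route from the paper's.

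The paper reduces from minimum satisfiability with at most two literals per clause. It uses $n=2S+T$ coordinates and a mixed budgeted set, with $(\underline{c}_i,d_i)=(1,0)$ on the literal coordinates $q_i,\bar q_i$ and $(0,1)$ on the clause coordinates $y_k$. Its scenarios encode $q_i+\bar q_i\le 1$ and $\frac12 z_{k1}+\frac12 z_{k2}+y_k\le 1$, with $\Gamma=r$ and threshold $S+r$. The converse direction is only asserted there. It does hold: a value of $S+r$ forces $q_i+\bar q_i=1$ for all $i$, and $y_k=1$, $z_{k1}=z_{k2}=0$ for every clause with $\delta_k>0$ (at least $r$ of them). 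This defines a consistent assignment leaving those clauses unsatisfied.

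You instead reduce directly from \textsc{Independent Set} with $\underline{\pmb c}=\pmb 0$, $\pmb d=\pmb 1$, $\Gamma=k$. The feasible region of \eqref{a-bl} is then the fractional stable set polytope, and you check both directions explicitly. Your version buys a slightly stronger statement: hardness already holds for the canonical set $\cU'=\Delta_\Gamma$ and $0/1$ scenarios with at most two nonzeros. Half-integrality also gives you a gap, $\alpha=k$ on yes-instances versus $\alpha\le k-\frac12$ on no-instances. Hence computing $\alpha$ to within additive error below $\frac14$ is already NP-hard.

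Two minor remarks. First, the gap is not needed for the exact reduction. Since $\tilde p_i\le 1$ and the maximum is attained on the compact set, $\alpha\ge k$ holds if and only if an independent $k$-set exists. Second, your motivating aside that maximizing $\|\tilde{\pmb p}\|_1$ is easy is valid only because $P\subseteq\mathbb{R}^n_+$ makes it linear; over general polytopes it is NP-hard. It plays no role in the proof.
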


\begin{proof}
We prove the result by reduction from the following strongly NP-complete problem: Given $S$ Boolean variables $x_1,\ldots,x_S$, $T$ clauses $C_1,\ldots,C_T$, and an integer $r<T$, decide whether there exists a truth assignment under which at least $r$ clauses are not satisfied. This problem remains strongly NP-complete when every clause contains at most two literals~\cite{kohli1994minimum}.

We construct an instance of the $\alpha$-problem as follows. We set $n=2S+T$, $\Gamma=r$, and partition the $n$ variables $\pmb{p}$ into three groups $\pmb{p}=(\pmb{q},\bar{\pmb{q}},\pmb{y})$, where $q_i$ and $\bar q_i$, $i\in[S]$, correspond to the two possible truth values of $x_i$, and $y_k$, $k\in[T]$, corresponds to clause $C_k$. For the objective coefficients, we set $(c_i,d_i)=(1,0)$ for $q_i$ and $\bar q_i$, and $(c_i,d_i)=(0,1)$ for $y_k$. Hence, for any feasible solution,
\[
    \sum_{i\in[n]}(c_i+d_i\delta_i)p_i
    =
    \sum_{i\in[S]}(q_i+\bar q_i)
    +
    \sum_{k\in[T]}\delta_k y_k.
    \tag{20}
\]

We construct two types of scenarios as follows. For every $i\in[S]$, we introduce one scenario with constraint $q_i+\bar q_i\le 1$.
Consider clause $C_k$. If $C_k$ contains two literals, denote the corresponding truth-indicator variables by $z_{k1}$ and $z_{k2}$, where
\[
z_{k\ell}=
\begin{cases}
q_i,&\text{if the corresponding literal is }x_i,\\
\bar q_i,&\text{if the corresponding literal is }\neg x_i.
\end{cases}
\]
For each $k\in[T]$, we introduce the scenario constraint $\frac12 z_{k1}+\frac12 z_{k2}+y_k\le 1$.

This construction gives the equivalence that at least $r$ clauses can be left unsatisfied, if and only if
the $\alpha$-problem has value at least $S+r$. As it is polynomial in the size of the satisfiability instance, the $\alpha$-problem is NP-hard.
\end{proof}

\subsection{Calculating $\beta$}
\label{sec:evalbeta}

We now consider the second evaluation problem of finding $\beta$, that is, we would like to solve the following problem for a given discrete uncertainty set $\cU$ and a given budgeted uncertainty set $\cU'$:
\[ \min\ \beta \text{ s.t. } \forall \pmb{c}\in\cU\ \exists \pmb{c}'\in \cU'\ \forall i\in[n]\ c_i \le \beta c'_i \]
Again, we begin with the inner problem and assume that some $\pmb{c}\in\cU$ is given. By substituting $t = 1/\beta$, we obtain the following linear program:
\begin{align*}
\max\ & t \\
\text{s.t. } & t c_i \le \underline{c}_i + d_i \delta_i & \forall i\in[n] \\
& \pmb{\delta} \in \Delta_\Gamma \\
& t \ge 0
\end{align*}
By using linear programming duality and writing $\pmb{c}$ as a convex combination of $\{\pmb{c}^1,\ldots,\pmb{c}^N\}$, we obtain the following nonlinear formulation for the $\beta$-problem:
\begin{align*}
\min\ & \sum_{i\in[n]} \underline{c}_i p_i + \Gamma \pi + \sum_{i\in[n]} q_i \tag{$\beta$-Nonlinear}\label{b-nl}\\
\text{s.t. } & \sum_{i\in[n]} (\sum_{j\in[N]} \lambda_j c^j_i) p_i \ge 1 \\
& \pi + q_i \ge d_ip_i  & \forall i\in[n] \\
& \sum_{j\in[N]} \lambda_j = 1 \\
& \pmb{\lambda}, \pmb{p}, \pmb{q} \ge \pmb{0} \\
& \pi \ge 0
\end{align*}
Consider any fixed choice of $\pmb{p}$, $\pmb{q}$, and $\pi$. The remaining problem is then to check if there exists $\pmb{\lambda}\ge\pmb{0}$ such that
\begin{align*}
&\sum_{j\in[N]} (\sum_{i\in[n]} c^j_i p_i) \lambda_j \ge 1 \\
& \sum_{j\in[N]} \lambda_j = 1
\end{align*}
Such $\lambda$ exists if and only if $\max_{i\in[n]} c^j_ip_i \ge 1$. This means we can remove $\pmb{\lambda}$ from the problem formulation to obtain the following result.

\begin{theorem}
The $\beta$-problem is equivalent to:
\begin{align*}
\min_{j\in[N]} \big\{ \min\ & \sum \underline{c}_ip_i + \Gamma\pi + \sum q_i \\
\text{s.t. } & \sum_{i\in[n]} c^j_i p_i \ge 1 \\
& \pi + q_i \ge d_i p_i & \forall i\in[n] \\
& \pmb{p},\pmb{q} \ge \pmb{0} \\
& \pi \ge 0 & \big\}
\end{align*}
In particular, the $\beta$-problem can be solved in polynomial time.
\end{theorem}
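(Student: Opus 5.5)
The plan is to establish the claimed equivalence by reducing the $\beta$-problem scenario by scenario, so that the dual step is applied only to single points of $\cU$.

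\textbf{Step 1: separate the scenarios.} The defining condition is a universal quantifier over the finite set $\cU$, and $\cU'$ is convex. Hence the smallest feasible $\beta$ is $\max_{j\in[N]}\beta_j$, where $\beta_j$ is the smallest factor for which some $\pmb{c}'\in\cU'$ satisfies $c^j_i\le\beta_j c'_i$ for all $i$. Convex combinations in \eqref{b-nl} can be dropped at this stage. If $\pmb{c}=\sum_j\lambda_j\pmb{c}^j$ with each $\pmb{c}^j\le\beta\pmb{c}'^j$ and $\pmb{c}'^j\in\cU'$, then $\pmb{c}\le\beta\sum_j\lambda_j\pmb{c}'^j$, and this combination lies in $\cU'$ by convexity. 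So convex combinations never determine the value.

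\textbf{Step 2: dualize the inner LP for fixed $j$.} Set $t=1/\beta_j$. The LP is $\max t$ subject to $tc^j_i-d_i\delta_i\le\underline{c}_i$, $\delta_i\le 1$, $\sum_i\delta_i\le\Gamma$, and $\pmb{\delta},t\ge 0$. Assign multipliers $p_i$ to the first constraints, $q_i$ to the bounds $\delta_i\le 1$, and $\pi$ to the budget constraint. The dual is $\min\sum_i\underline{c}_ip_i+\Gamma\pi+\sum_i q_i$. Its constraints are $\sum_i c^j_ip_i\ge 1$ (from the column of $t$) and $\pi+q_i\ge d_ip_i$ (from the columns of $\delta_i$), with all variables nonnegative. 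This is exactly the $j$-th inner problem in the statement.

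\textbf{Step 3: apply strong duality and assemble.} The primal is feasible, taking $t=0$ and $\pmb{\delta}=\pmb{0}$. If the primal is unbounded, the dual is infeasible with value $+\infty$, which corresponds to $t=\infty$, i.e.\ $\beta_j=0$. This happens, for instance, when $\pmb{c}^j=\pmb{0}$. Otherwise the two optimal values coincide, so the $j$-th inner minimum equals $1/\beta_j$. Then
\[
\max_j \beta_j = \frac{1}{\min_j \,(\text{inner value}_j)},
\]
which is the stated formula under the paper's convention that the problem computes $1/\beta$. The main obstacle is this bookkeeping of reciprocals together with the degenerate cases. It also gives a direct check of the paper's heuristic claim that $\pmb{\lambda}$ can be eliminated: the constraint in \eqref{b-nl} is linear in $\pmb{\lambda}$ over the simplex, so it is feasible iff $\max_j\sum_i c^j_ip_i\ge 1$, not $\max_i c^j_ip_i\ge 1$ as written. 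Minimizing over the joint choice of $j$ and $(\pmb{p},\pmb{q},\pi)$ then yields the min over $j$.

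\textbf{Step 4: polynomiality.} The formula consists of $N$ linear programs, each with $O(n)$ variables and constraints. Each can be solved in polynomial time, for instance by the ellipsoid or interior-point method. Taking the minimum over $j$ adds only $O(N)$ work, so the $\beta$-problem is solvable in polynomial time.
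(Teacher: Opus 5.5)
Your proof is correct and reaches the same $N$ linear programs as the paper, but it performs the two steps in the opposite order.

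The paper first dualizes the inner LP for a generic $\pmb{c}\in\textup{conv}(\cU)$, obtaining the single bilinear program \eqref{b-nl}, and only then decomposes. For fixed $(\pmb{p},\pmb{q},\pi)$ the condition on $\pmb{\lambda}$ is linear over the simplex, so a vertex $\lambda_j=1$ suffices, and enumerating $j$ leaves $N$ LPs.

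You decompose first. You use only that the quantifier ranges over the finite set $\cU$, so $\beta=\max_j\beta_j$, and you invoke convexity of $\cU'$ separately to see that passing to $\textup{conv}(\cU)$ is harmless. You then dualize a single fixed scenario.

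The paper's route keeps the presentation parallel to the $\alpha$-problem, where the bilinear formulation cannot be avoided, and its vertex argument disposes of the convex-hull relaxation in one stroke. Your route bypasses the bilinear detour entirely and makes the reciprocal bookkeeping and degenerate cases explicit, which the paper leaves implicit. Besides the unbounded case, which occurs exactly when $\pmb{c}^j=\pmb{0}$, there is the case of inner value $0$. This means $\beta_j=\infty$, because $c^j_i>0$ for some $i$ while $c'_i=0$ for all $\pmb{c}'\in\cU'$, and it is handled by the same convention $1/0=\infty$.

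You are also right that the elimination criterion stated before the theorem should read $\max_{j\in[N]}\sum_{i\in[n]}c^j_ip_i\ge 1$ rather than $\max_{i\in[n]}c^j_ip_i\ge 1$.
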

\begin{proof}

For any feasible choice of $\pmb{p}$, $\pmb{q}$, and $\pi$ in problem~\eqref{b-nl}, there is some $j\in[N]$ such that $\lambda_j=1$ in an optimal solution to the remaining problem in variables $\pmb{\lambda}$ only. Therefore, we enumerate each possibility to set $\lambda_j=1$ and optimize in $\pmb{p}$, $\pmb{q}$, and $\pi$ to find an optimal solution. As each remaining problem is a linear program, this is possible in polynomial time.
\end{proof}

\subsection{Summary of evaluation}

The results of this section can be summarized as follows. To calculate the guarantee implied by Theorem~\ref{th:approx} for given sets $\cU$ and $\cU'$, we need to solve one nonlinear program to calculate $\alpha$, and $N$ linear programs to calculate $\beta$. For the former, preliminary experiments show that formulation~\eqref{a-bl} is the most computationally efficient approach. In general, calculating the approximation guarantee is NP-hard, while it is possible to do in polynomial time if $N$ or $\Gamma$ is bounded by a constant.

\section{Optimizing budgeted sets}
\label{sec:optimize}

We now consider the problem of constructing budgeted uncertainty sets $\cU'$ that minimize the approximation guarantee $\alpha\cdot\beta$. As a consequence of the result that evaluating a set is a hard problem (see Theorem~\ref{th:hardness}), the corresponding optimization problem is unlikely to be contained in NP, but is possibly on a higher level of the polynomial complexity hierarchy. Thus, we cannot expect a compact mixed-integer linear programming formulation for this problem. We first present an exact solution method in Section~\ref{sec:exact}, and then introduce heuristics in Section~\ref{sec:heuristics}.

\subsection{Exact approach}
\label{sec:exact}

To avoid the inherent nonlinearity of optimizing both $\alpha$ and $\beta$, we make the following observation, which allows us to set one of the two values to be one without loss of generality.

\begin{theorem}
The approximation guarantee provided by Theorem~\ref{th:approx} is scaling-invariant, i.e., any set $\mu\cU'$ with $\mu>0$ provides the same approximation guarantee as $\cU'$.
\end{theorem}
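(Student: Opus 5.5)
The plan is to show directly that the smallest admissible values in Theorem~\ref{th:approx} transform as $\alpha(\mu\cU') = \mu\,\alpha(\cU')$ and $\beta(\mu\cU') = \beta(\cU')/\mu$. The product $\alpha\beta$ is then unchanged. Throughout, $\alpha$ and $\beta$ denote the smallest values for which conditions (i) and (ii) hold, as fixed after Theorem~\ref{th:approx}. The set $\cU$ stays fixed, and only $\cU'$ is replaced by $\mu\cU'=\{\mu\pmb{c}':\pmb{c}'\in\cU'\}$.

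For $\alpha$, I will show that $\alpha$ is feasible for $\cU'$ in condition (i) if and only if $\mu\alpha$ is feasible for $\mu\cU'$.
\begin{itemize}
\item Suppose $\alpha$ is feasible for $\cU'$. Take any element of $\mu\cU'$; it has the form $\mu\pmb{c}'$ with $\pmb{c}'\in\cU'$. By (i) there is $\pmb{c}\in\textup{conv}(\cU)$ with $c'_i\le\alpha c_i$ for all $i$. Multiplying by $\mu>0$ preserves the inequalities, so $\mu c'_i\le(\mu\alpha)c_i$ for all $i$.
\item The converse is the same argument applied with $1/\mu$ to the set $\mu\cU'$.
\end{itemize}
Hence the feasible sets of $\alpha$-values correspond under multiplication by $\mu$. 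Their infima therefore satisfy $\alpha(\mu\cU')=\mu\,\alpha(\cU')$, including the case where the infimum is $+\infty$.

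For $\beta$, I first note that $\textup{conv}(\mu\cU')=\mu\,\textup{conv}(\cU')$, since scaling is linear and commutes with taking convex combinations. For budgeted sets this is immediate anyway, since the set is convex. Next, for $\pmb{c}\in\cU$ and $\pmb{c}'\in\textup{conv}(\cU')$, the condition $c_i\le\beta c'_i$ is equivalent to $c_i\le(\beta/\mu)(\mu c'_i)$. So $\beta$ is feasible for $\cU'$ in (ii) exactly when $\beta/\mu$ is feasible for $\mu\cU'$, which gives $\beta(\mu\cU')=\beta(\cU')/\mu$.

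Multiplying the two relations yields $\alpha(\mu\cU')\beta(\mu\cU')=\alpha(\cU')\beta(\cU')$.

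Two further remarks complete the argument.
\begin{itemize}
\item The scaled set is again budgeted. With parameters $(\mu\underline{\pmb{c}},\mu\pmb{d},\Gamma)$, the set $\mu\cU'$ is a budgeted set in $\mathbb{R}^n_+$, so restricting the construction to budgeted sets loses nothing.
\item The argument does not rely on the infima being attained. Alternatively, attainment follows from the LP characterizations in Section~\ref{sec:eval}.
\end{itemize}

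Consequently, one can normalize $\beta=1$ (or $\alpha=1$) without loss of generality. I expect no real obstacle here. The only care needed is with the degenerate cases: an infinite $\alpha$ (when $\underline{\pmb{c}}=\pmb{d}=\pmb{0}$, excluded earlier) or a zero value. Under the convention $\mu\cdot\infty=\infty$, these cases are handled consistently.
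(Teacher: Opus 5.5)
Your proof is correct and follows the paper's argument: you rescale the dominating witnesses in conditions (i) and (ii) to see that $\mu\alpha$ and $\beta/\mu$ are admissible for $\mu\cU'$, and you also run the argument with $1/\mu$ for the reverse direction, which the paper leaves implicit, so the smallest values transform exactly. One harmless slip in a side remark: $\underline{\pmb{c}}=\pmb{d}=\pmb{0}$ gives $\cU'=\{\pmb{0}\}$ and thus $\alpha=0$, not $\alpha=\infty$, and this does not affect your argument.
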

\begin{proof}
Let $\cU$ and $\cU'$ be given, and consider $\mu > 0$. Then we have the following:
\begin{enumerate}[label=(\roman*)]
\item Consider any $\pmb{c}'\in \mu\cU'$. As there exists $\pmb{c}\in\textup{conv}(\cU)$ with $c'_i \le \alpha c_i$ for all $i\in[n]$, we have that $\mu c'_i \le \alpha' c_i$ for all $i\in[n]$ with $\alpha' = \mu\alpha$.

\item Consider any $\pmb{c}\in\cU$. As there exists $\pmb{c}'\in\textup{conv}(\cU')$ with $c_i \le \beta c'_i$ for all $i\in[n]$, we have that $c_i \le \beta' \mu c'_i$ for all $i\in[n]$ with $\beta' = \beta/\mu$.
\end{enumerate}
Taken together, the approximation guarantee of $\mu\cU'$ is thus $\alpha' \beta' = (\mu\alpha)(\beta/\mu) = \alpha\beta$.
\end{proof}

In principle, we could derive an optimization model by taking the $\alpha$-evaluation problem formulation, the $\beta$-evaluation problem formulation, and multiply their objective values while taking $\underline{\pmb{c}}$, $\pmb{d}$, and $\Gamma$ as variables. Due to the high degree of nonlinearity this approach contains, we consider an alternative modeling approach.

We now assume $\alpha = 1$ without loss of generality. Furthermore, let us assume that $\Gamma$ is fixed for ease of presentation -- we explain how to optimize over $\Gamma$ afterwards. Then, we have the following problem of optimizing $t=1/\beta$.
\begin{align}
\max\ & t \tag{Opt-$\Gamma$}\label{model:opt}\\
\text{s.t. }
& t c^j_i \le \underline{c}_i + d_i\delta^j_i & \forall j\in[N], i\in[n] \label{eq:opt1}\\
& \underline{c}_i + d_i\delta_i \le \sum_{j\in[N]} \lambda^{\pmb{\delta}}_j c^j_i &  \forall \pmb{\delta}\in\Delta_\Gamma, i\in[n] \label{eq:opt2}\\
& \sum_{j\in[N]} \lambda^{\pmb{\delta}}_j = 1 &  \forall \pmb{\delta}\in\Delta_\Gamma \label{eq:opt3}\\
& \sum_{i\in[n]} \delta^j_i\le\Gamma & \forall j\in[N] \label{eq:opt4}\\
& \delta^j_i\in[0,1] & \forall j\in[N], i\in[n] \label{eq:opt5}\\
& \lambda^{\pmb{\delta}}_j \in[0,1] & \forall \pmb{\delta}\in\Delta_\Gamma, j\in[N] \label{eq:opt6}\\
& \underline{c}_i, d_i \ge 0 & \forall i\in[n] \label{eq:opt7}
\end{align}
Through variables $\underline{c}_i$ and $d_i$, we define the shape of the budgeted uncertainty set. Constraints~\eqref{eq:opt1} measure that approximation guarantee $\beta$. For each scenario $\pmb{c}^j\in\cU$, we construct a scenario in $\cU'$ through variables $\pmb{\delta}^j$, $j\in[N]$. Through constraints~\eqref{eq:opt4} and \eqref{eq:opt5}, we ensure that each $\pmb{\delta}^j$ is contained in $\Delta_\Gamma$. Conversely, we ensure by constraints~\eqref{eq:opt2} and \eqref{eq:opt3} that $\alpha=1$, i.e., each element in $\cU'$ is dominated by some element in the convex hull of $\cU$. To this end, we use variables $\lambda^{\pmb{\delta}}_j$ for each $\pmb{\delta}\in\Delta_\Gamma$ (note that $\pmb{\delta}\in\Delta_\Gamma$ is used as an input parameter, while $\pmb{\delta}^j$ denote variables here). While $\Delta_\Gamma$ is a continuous set, observe that it is equivalent to use the extreme points of $\Delta_\Gamma$, which leads to a finite, but exponentially-sized problem formulation. We note that if $\Gamma$ is assumed to be a constant value, then the number of variables and constraints is polynomially bounded. However, constraint~\eqref{eq:opt1} remains nonlinear due to the product $d_i\delta^j_i$.

To avoid using all constraints and variables of model~\eqref{model:opt} at the beginning, we propose a column-and-constraint generation procedure (see \cite{zeng2013solving}). This is summarized as Algorithm~\ref{alg1}.

\begin{algorithm}[htbp]
\caption{Iterative algorithm for model~\eqref{model:opt}}\label{alg1}
\begin{algorithmic}[1]
\REQUIRE Discrete uncertainty set $\cU\subseteq\mathbb{R}^n_+$.
\STATE Initialize $D = \{\pmb{0}\} \subseteq \Delta_\Gamma$. \label{alg1-1}
\REPEAT
    \STATE Solve model~\eqref{model:opt}, where $\Delta_\Gamma$ is replaced with $D$, to find $\cU'$ and $t$. \label{alg1-2}
    \STATE Solve the $\alpha$-problem \eqref{a-bl} to find $\alpha$ and $\pmb{\delta}$. \label{alg1-3}
    \IF{$\alpha > 1$}
        \STATE Add $\pmb{\delta}$ to $D$. \label{alg1-4}
    \ENDIF
\UNTIL{$\alpha \le 1$}
\RETURN Budgeted uncertainty set $\cU'$ and approximation guarantee $\alpha/t$.
\end{algorithmic}
\end{algorithm}

The principle of this approach is that the set $\Delta_\Gamma$ is replaced by a smaller subset $D$. We solve model~\eqref{model:opt} using this subset, then check if additional elements need to be added to $D$, and repeat if necessary. In Line~\ref{alg1-1}, we initialize set $D$ with only the zero-element. We solve model~\eqref{model:opt} with respect to the columns and constraints of $D$ in Line~\ref{alg1-2}. To check if there exists $\pmb{\delta}\in\Delta_\Gamma$ for which no $\pmb{\lambda}^{\pmb{\delta}}$ can be found so that constraints~\eqref{eq:opt2}, \eqref{eq:opt3} and \eqref{eq:opt6} are fulfilled, is then equivalent to checking if the $\alpha$-guarantee is indeed less or equal to one. This means that we solve problem~\eqref{a-bl} in the next step (see Line~\ref{alg1-3}). If this is the case, we have found an optimal solution to model~\eqref{model:opt}; otherwise, we add the vector $\pmb{\delta}$ that has been found in problem~\eqref{a-bl} to the current set $D$, and repeat.

We now discuss how to extend this method to the more general case when $\Gamma$ is optimized as well.

In this setting, constraints that hold for all $\pmb{\delta}\in\Delta_\Gamma$ depend on $\Gamma$; this is a situation that is similar to two-stage robust optimization with decision-dependent uncertainty. It is still possible to treat this problem, as the dependency is only in a single right-hand side parameter. To this end, we introduce a new binary variable $z^{\pmb{\delta}}$ for all $\pmb{\delta}\in D$ that is used to ``switch off'' candidates $\pmb{\delta}\in D$ that are infeasible, depending on variable $\Gamma$.
\begin{align}
\max\ & t \tag{Opt}\label{model:optG}\\
\text{s.t. }
& t c^j_i \le \underline{c}_i + d_i\delta^j_i & \forall j\in[N], i\in[n] \label{eq:optG1} \\
& \underline{c}_i + d_i\delta_i \le \sum_{j\in[N]} \lambda^{\pmb{\delta}}_j c^j_i + Mz^{\pmb{\delta}} &  \forall \pmb{\delta}\in D, i\in[n] \label{eq:optG2}\\
& \sum_{j\in[N]}\lambda^{\pmb{\delta}}_j = 1 &  \forall \pmb{\delta}\in D \label{eq:optG3}\\
& \sum_{i\in[n]} \delta_i  \leq \Gamma + n z^{\pmb{\delta}} & \forall \pmb{\delta}\in D \label{eq:optG4}\\
& \sum_{i\in[n]} \delta_i  \ge \Gamma - n (1-z^{\pmb{\delta}}) & \forall \pmb{\delta}\in D \label{eq:optG5}\\
& \sum_{i\in[n]} \delta^j_i\le\Gamma & \forall j\in[N] \label{eq:optG6}\\
& z^{\pmb{\delta}} \in \{0,1\}  & \forall \pmb{\delta}\in D \label{eq:optG7}\\
& \Gamma\in [0,n] \\
& \lambda^{\pmb{\delta}}_j \in [0,1] & \forall \pmb{\delta}\in D, j\in[N] \label{eq:optG8}\\
& \delta^j_i\in[0,1] & \forall j\in[N], i\in[n] \label{eq:optG9}\\
& \underline{c}_i, d_i \ge 0 & \forall i\in[n] \label{eq:optG10}
\end{align}
In this model~\eqref{model:optG}, constraints~\eqref{eq:optG4} and \eqref{eq:optG5} ensure that $z^{\pmb{\delta}}=1$ if and only if $\pmb{\delta}\in\Delta_\Gamma$. Accordingly, they are included in constraints~\eqref{eq:optG2}, where $M$ is a sufficiently large constant (we implement the implication via Gurobi indicator constraints (\texttt{addGenConstrIndicator})). This model can now be used in combination with the iterative method presented in Algorithm~\ref{alg1}.

\subsection{Heuristic approaches}
\label{sec:heuristics}

The iterative approach presented in Algorithm~\ref{alg1} can be used to construct a budgeted uncertainty set that gives the best possible approximation guarantee according to Theorem~\ref{th:approx}. However, each iteration requires the solution of a (computationally challenging) nonlinear optimization problem. The iterative method can be used as a heuristic, by stopping before the termination criterion $\alpha \le 1$ is met. In this section, we describe faster approaches to construct budgeted uncertainty sets heuristically.

\subsubsection{Outer and Inner Heuristic}

We first construct a budgeted uncertainty set that contains the given discrete set $\cU$, which immediately gives $\beta=1$ in condition (ii) of Theorem~\ref{th:approx}. We refer to this approach as the ``outer'' heuristic, as we aim to approximate the discrete set from the outside. We set the lower bounds to be the smallest observed values, that is, $\underline{c}_i = \min_{j \in [N]} c_i^j$ for all $i\in[n]$. By doing so, we immediately find that $d_i \ge c_i^j - \underline{c}_i$ for all $i\in[n]$ and $j\in[N]$ is a necessary condition to have $\cU' \supseteq \cU$. Accordingly, we set $d_i$ to the smallest such value. This allows us to choose $\Gamma$ as the smallest value so that $\cU' \supseteq \cU$ for the given $\underline{\pmb{c}}$ and $\pmb{d}$. In Algorithm~\ref{alg:budgeted-set}, we summarize this procedure.

\begin{algorithm}[htb]
\caption{Outer Heuristic}
\label{alg:budgeted-set}
\begin{algorithmic}[1]
\REQUIRE Discrete uncertainty set $\cU\subseteq\mathbb{R}^n_+$.
\FOR{each $i \in [n]$}
    \STATE Set $\underline{c}_i = \min_{j \in [N]} c_i^j$.
    \STATE Set $d_i = \max_{j \in [N]} c_i^j - \underline{c}_i$.
\ENDFOR
\STATE Set $\Gamma = \max_{j \in [N]} \sum_{i \in [n] : d_i > 0} (c_i^j - \underline{c}_i) / d_i$.
\RETURN $\cU'$ given by $\underline{\pmb{c}}$, $\pmb{d}$, and $\Gamma$
\end{algorithmic}
\end{algorithm}

As a second heuristic approach, we construct a budgeted uncertainty set in a way that condition (i) of Theorem~\ref{th:approx} is always fulfilled with $\alpha=1$, by choosing a budgeted set such that $\cU' \subseteq \cU$. We refer to this approach as the ``inner'' heuristic, as we aim to approximate the discrete set from the inside. To construct a lower bound vector $\underline{\pmb{c}}$ that is in $\cU$, we choose the midpoint $\underline{c}_i = \frac{1}{N}\sum_{j\in[N]} c^j_i$ for all $i\in[n]$. To ensure that $\pmb{d}$ and $\Gamma$ are chosen in a way that $\cU'$ is contained in $\cU$, we use $\Gamma=1$.

\begin{lemma}
Let $\underline{\pmb{c}} = \sum_{j=1}^N \pmb{c}^j/N$, and let $\Gamma=1$. Then, $\cU'\subseteq\cU$ if and only if for all $j\in[N]$, $\tilde{\pmb{c}}^j\in\cU$, where $\tilde{c}^j_i = \underline{c}_i$ for $i\neq j$, and $\tilde{c}^j_j = \underline{c}_j + d_j$.
\end{lemma}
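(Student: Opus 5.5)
The statement should be read with $\textup{conv}(\cU)$ in place of $\cU$, as in condition (i) of Theorem~\ref{th:approx}. Since $\cU$ is finite, a budgeted set containing a segment cannot lie inside $\cU$ itself, so the convex hull is clearly intended. Likewise the index $j$ of $\tilde{\pmb{c}}^j$ should run over the coordinates $[n]$ rather than the scenarios. With this reading, my plan is to identify $\cU'$ for $\Gamma=1$ as the convex hull of finitely many explicit points. Containment in the convex set $\textup{conv}(\cU)$ then reduces to checking those points.

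\textbf{Step 1 (structure of $\Delta_1$).} I will show $\Delta_1=\{\pmb{\delta}\in[0,1]^n:\sum_i\delta_i\le 1\}=\textup{conv}\{\pmb{0},\pmb{e}_1,\ldots,\pmb{e}_n\}$. One inclusion is trivial. For the other, any $\pmb{\delta}\in\Delta_1$ is written as $\bigl(1-\sum_i\delta_i\bigr)\pmb{0}+\sum_i\delta_i\pmb{e}_i$, which is a convex combination because $\delta_i\ge 0$ and $\sum_i\delta_i\le 1$.

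\textbf{Step 2 (image under the affine map).} The map $\phi:\pmb{\delta}\mapsto\underline{\pmb{c}}+\pmb{d}\circ\pmb{\delta}$ (componentwise product) is affine, and $\cU'=\phi(\Delta_1)$. Affine maps preserve convex combinations, so
\[ \cU'=\textup{conv}\{\phi(\pmb{0}),\phi(\pmb{e}_1),\ldots,\phi(\pmb{e}_n)\}=\textup{conv}\{\underline{\pmb{c}},\tilde{\pmb{c}}^1,\ldots,\tilde{\pmb{c}}^n\}. \]

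\textbf{Step 3 (the equivalence).} For necessity: each $\tilde{\pmb{c}}^k=\phi(\pmb{e}_k)$ lies in $\cU'$, so if $\cU'\subseteq\textup{conv}(\cU)$ then every $\tilde{\pmb{c}}^k\in\textup{conv}(\cU)$. For sufficiency, note first that $\underline{\pmb{c}}=\frac1N\sum_j\pmb{c}^j$ is a convex combination of the scenarios, hence in $\textup{conv}(\cU)$; this is exactly where the choice of the midpoint is used. If in addition all $\tilde{\pmb{c}}^k\in\textup{conv}(\cU)$, then all generators of $\cU'$ from Step 2 lie in the convex set $\textup{conv}(\cU)$, and therefore $\cU'\subseteq\textup{conv}(\cU)$.

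None of the steps is hard. The only point needing care is Step 1, namely that $\Delta_1$ has no vertices other than $\pmb{0}$ and the unit vectors; the explicit convex decomposition above settles this. I would also remark that the resulting condition is checkable: for each $k$, deciding $\tilde{\pmb{c}}^k\in\textup{conv}(\cU)$ is one linear feasibility problem in $\pmb{\lambda}$.
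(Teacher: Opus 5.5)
Your proof is correct and follows the same route as the paper: necessity because each $\tilde{\pmb{c}}^k$ lies in $\cU'$, and sufficiency because $\cU'$ is the convex hull of $\underline{\pmb{c}}$ and the $\tilde{\pmb{c}}^k$, all of which lie in the convex set $\textup{conv}(\cU)$. You also make explicit three points the paper leaves implicit: $\cU$ must be read as $\textup{conv}(\cU)$, the index runs over $[n]$, and $\underline{\pmb{c}}\in\textup{conv}(\cU)$ because it is the midpoint; and since you argue via generators rather than ``the extreme points'', your argument also covers the degenerate case $d_k=0$.
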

\begin{proof}
Note that $\tilde{\pmb{c}}^j \in \cU'$, as $\Gamma=1$. Therefore, if $\tilde{\pmb{c}}^j \notin \cU$, then clearly $\cU' \not\subseteq \cU$. So let us consider the case that $\tilde{\pmb{c}}^j \in \cU$ for all $j\in[n]$. Then, $\{ \tilde{\pmb{c}}^j : j\in[n]\} \cup \{\underline{\pmb{c}}\}$ are the extreme points of the polyhedron $\cU'$. As all extreme points are contained in $\cU$, we have $\cU'\subseteq \cU$.
\end{proof}

Using this insight, we choose the largest $\pmb{d}$ so that the scenarios $\tilde{\pmb{c}}^j$ are still contained in $\cU$. This can be done using the following small linear program for each $j\in[n]$:
\begin{align}
\max\ & d_j \tag{Max-$d$}\label{lpmaxd}\\
\text{s.t. } & \underline{c}_j + d_j = \sum_{k\in[N]} \lambda_k c^k_j \\
& \underline{c}_i = \sum_{k\in[N]} \lambda_k c^k_i & \forall i\in[n], i\neq j \\
& \sum_{k\in[N]} \lambda_k = 1 \\
& \lambda_k \in [0,1] & \forall k\in[N] \\
& d_j \ge 0
\end{align}
Here, $\pmb{\lambda}$ denotes the convex combination of scenarios of $\cU$ that represents $\tilde{\pmb{c}}^j$. The complete construction is presented in Algorithm~\ref{alg:midpoint}.

\begin{algorithm}[htb]
\caption{Inner Heuristic}
\label{alg:midpoint}
\begin{algorithmic}[1]
\REQUIRE Discrete uncertainty set $\cU\subseteq\mathbb{R}^n_+$.
\STATE Set $\underline{\pmb{c}} = \sum_{j\in[N]} \pmb{c}^j/N$.
\STATE Set $\Gamma = 1$.
\FOR{each $j \in [n]$}
\STATE Determine $d_j$ by solving problem~\eqref{lpmaxd}
\ENDFOR
\RETURN $\cU'$ given by $\underline{\pmb{c}}$, $\pmb{d}$, and $\Gamma$
\end{algorithmic}
\end{algorithm}

We illustrate the two heuristics in Figure~\ref{fig:examplealg}. Here, the red region represents the convex hull of the discrete set, while the blue regions correspond to the budgeted sets generated by the Inner and Outer Heuristics.

\begin{figure}[htbp]
    \centering
    \begin{subfigure}{0.48\textwidth}
        \includegraphics[width=\textwidth]{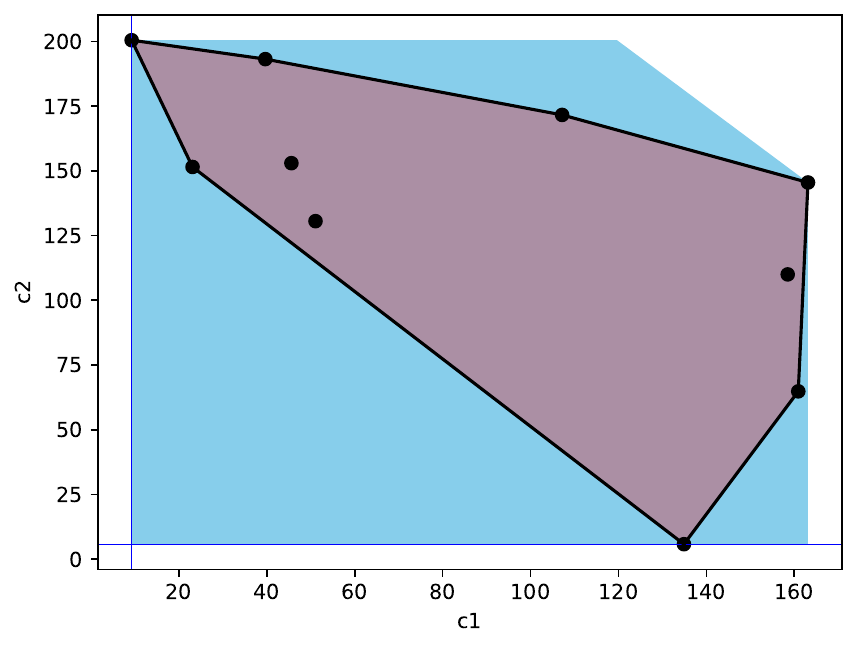}
        \caption{Outer Heuristic}
        \label{fig:img4}
    \end{subfigure}
        \hfill
    \begin{subfigure}{0.48\textwidth}
        \includegraphics[width=\textwidth]{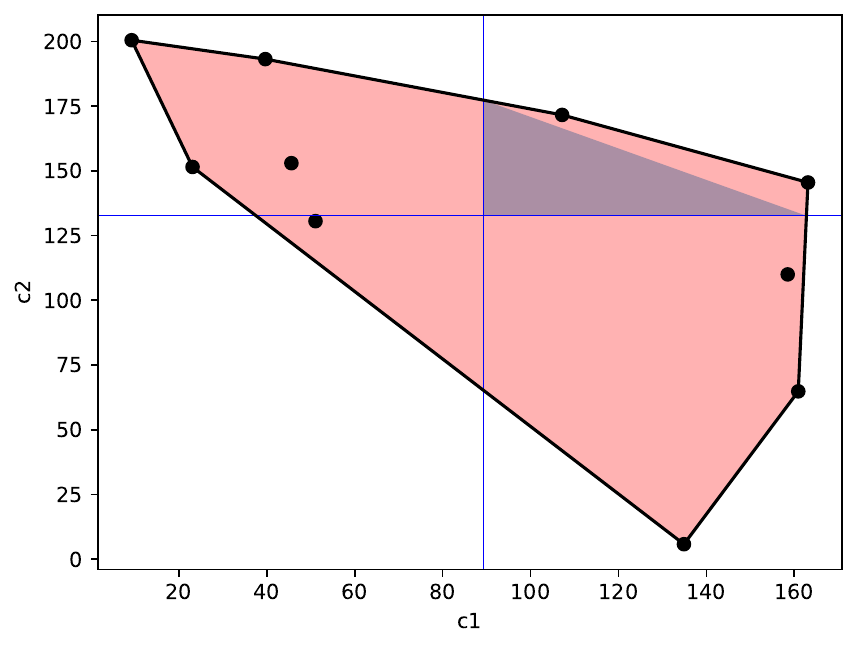}
        \caption{Inner Heuristic}
        \label{fig:img3}
    \end{subfigure}
\caption{Comparison of heuristic-generated budgeted sets and the convex hull of the discrete set.}
    \label{fig:examplealg}
\end{figure}

Clearly, both approaches may not construct a budgeted uncertainty set with the best possible guarantee. As an example, consider $\cU=\{\pmb{c}^1,\pmb{c}^2\}$ with
\[ \pmb{c}^1 = \begin{pmatrix}1 \\ 1 \\ 0 \end{pmatrix}\quad
\pmb{c}^2 = \begin{pmatrix}0\\ 0 \\ 1 \end{pmatrix} \]
In the outer heuristic, we choose $\underline{\pmb{c}} = (0,0,0)^\top$ and $\pmb{d} = (1,1,1)^\top$ with $\Gamma=2$. In the inner heuristic, we choose  $\underline{\pmb{c}} = (1/2,1/2,1/2)^\top$ and $\pmb{d} = (0,0,0)^\top$ with $\Gamma=1$. A direct calculation shows that both sets result in a 2-approximation guarantee. However, we might choose $\underline{\pmb{c}} = (1/2,1/2,0)^\top$ and $d=(1,1,1)^\top$ with $\Gamma=1$ instead, which gives a better $1.5$-approximation guarantee.

Still, we can estimate that the budgeted uncertainty sets constructed with these heuristics will never give a guarantee that is worse than $N$, as the following theorem shows.

\begin{theorem}
Let $\X\subseteq\mathbb{R}^n_+$ and let $\cU=\{\pmb{c}^1,\ldots,\pmb{c}^N\}\subseteq\mathbb{R}^n_+$ be a discrete uncertainty set. Let $\cU'$ be a budgeted uncertainty set constructed by either the Outer or the Inner Heuristic. Then, any optimal solution for the robust (min-max) problem with respect to $\cU'$ gives a robust objective value with respect to $\cU$ that is at most $N$ times the optimal robust objective value with respect to $\cU$.
\end{theorem}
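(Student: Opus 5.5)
The plan is to invoke Theorem~\ref{th:approx} for each heuristic and show $\alpha\beta\le N$. For the Outer Heuristic we have $\cU\subseteq\cU'$, so $\beta=1$, and it remains to show $\alpha\le N$. For the Inner Heuristic we have $\cU'\subseteq\textup{conv}(\cU)$, so $\alpha=1$, and it remains to show $\beta\le N$.

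\textbf{Outer Heuristic.} Take any $\pmb{c}'\in\cU'$, so $c'_i=\underline{c}_i+d_i\delta_i\le \underline{c}_i+d_i=\max_{j} c^j_i$. Choose the barycenter $\bar{\pmb{c}}=\frac1N\sum_{j\in[N]}\pmb{c}^j\in\textup{conv}(\cU)$. Since all data are non-negative,
\[ N\bar c_i=\sum_{j\in[N]} c^j_i\ge \max_{j\in[N]} c^j_i\ge c'_i \quad\text{for all } i\in[n]. \]
Hence condition (i) holds with $\alpha=N$.

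First, though, I will check that $\cU\subseteq\cU'$ really holds, since $\beta=1$ depends on it. For each $j$, set $\delta_i=(c^j_i-\underline{c}_i)/d_i$ when $d_i>0$, and $\delta_i=0$ otherwise. Then $\delta_i\in[0,1]$, the sum $\sum_i\delta_i$ is at most $\Gamma$ by the choice of $\Gamma$, and $c^j_i=\underline{c}_i+d_i\delta_i$. When $d_i=0$ we have $c^j_i=\underline{c}_i$ for all $j$, so this case is consistent. Thus $\pmb{c}^j\in\cU'$ and $\beta=1$.

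\textbf{Inner Heuristic.} The earlier lemma, together with the feasibility of $\tilde{\pmb{c}}^j$ enforced by \eqref{lpmaxd}, gives $\cU'\subseteq\textup{conv}(\cU)$. (This is the sense in which ``$\cU'\subseteq\cU$'' is meant; note that $d=0$ is always feasible because $\underline{\pmb{c}}$ is the barycenter.) So condition (i) holds with $\alpha=1$.

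For condition (ii), fix $\pmb{c}^j\in\cU$ and use $\underline{\pmb{c}}\in\cU'$ (take $\pmb{\delta}=\pmb{0}$). Non-negativity gives
\[ N\underline{c}_i=\sum_{k\in[N]} c^k_i\ge c^j_i, \]
so $\beta\le N$ and therefore $\alpha\beta\le N$.

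Theorem~\ref{th:approx} then gives an $N$-approximation in both cases. I do not expect a real obstacle. The only step needing care is the containment $\cU\subseteq\cU'$ for the Outer Heuristic, in particular the degenerate coordinates with $d_i=0$, which are handled above.
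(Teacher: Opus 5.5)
Your proposal is correct and follows the paper's proof essentially step for step. For the Outer Heuristic you take $\beta=1$ and get $\alpha\le N$ from $\pmb{c}'\le\overline{\pmb{c}}\le N\hat{\pmb{c}}$. For the Inner Heuristic you take $\alpha=1$ and get $\beta\le N$ from $\pmb{c}\le N\underline{\pmb{c}}$ with $\underline{\pmb{c}}\in\cU'$.

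Your explicit check that $\cU\subseteq\cU'$, including coordinates with $d_i=0$, fills in a detail the paper asserts only ``by construction''. So does your reading of the inner containment as $\cU'\subseteq\textup{conv}(\cU)$.
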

\begin{proof}
For the following, we define $\overline{\pmb{c}}\in\mathbb{R}^n_+$ with $\overline{c}_i = \max_{j\in[N]} c^j_i$, and $\hat{\pmb{c}} = \sum_{j\in[N]} \pmb{c}^j/N \in \textup{conv}(\cU)$.

Let us first consider $\cU'$ generated by the outer heuristic. By construction, we have $\beta=1$.  For any $\pmb{c}'\in\cU'$, we then have
\[ \pmb{c}' \le \overline{\pmb{c}} \le \sum_{j\in[N]} \pmb{c}^j = N\hat{\pmb{c}}, \]
so $\alpha \le N$.

Let us now consider $\cU'$ generated by the inner heuristic. By construction, we have $\alpha=1$. For any $\pmb{c}\in\cU$, we then have
\[ \pmb{c} \le \sum_{j\in[N]} \pmb{c}^j = N\hat{\pmb{c}}, \]
and $\hat{\pmb{c}} = \underline{\pmb{c}} \in \cU'$, which completes the proof.
\end{proof}

Recall that robust combinatorial problems with budgeted uncertainty sets can be solved in polynomial time if the nominal problem can be solved in polynomial time; the chain of constructing first a budgeted set through the Inner or Outer Heuristic, and then solving the resulting robust problems, thus gives a polynomial-time $N$-approximation in such a case.

\subsubsection{Bayesian optimization}

Note that Inner and Outer Heuristics construct only a single set; potentially, further improvements can be made by types of local search or evolutionary algorithms that evaluate multiple candidate solutions and take the best of those that have been found. As an example for such a setting, we will test a Bayesian optimization algorithm, which attempts to black-box optimize a low-dimensional model.

Let $\underline{\pmb{c}}^{\text{IH}}$, $\pmb{d}^{\text{IH}}$, $\Gamma^{\text{IH}}$ denote the solution obtained from the inner heuristic, and
$\underline{\pmb{c}}^{\text{OH}}$, $\pmb{d}^{\text{OH}}$, $\Gamma^{\text{OH}}$ denote the solution from the outer heuristic. We construct a convex combination of these two solutions parameterized by $\pmb{\lambda} = (\lambda_1, \lambda_2, \lambda_3)^\top \in [0,1]^3$,
where each component is combined separately as follows:
\begin{align*}
\underline{\pmb{c}} &= \lambda_1 \underline{\pmb{c}}^{\text{IH}} + (1-\lambda_1) \underline{\pmb{c}}^{\text{OH}},\\
\pmb{d} &= \lambda_2 \pmb{d}^{\text{IH}} + (1-\lambda_2) \pmb{d}^{\text{OH}},\\
\Gamma &= \lambda_3 \Gamma^{\text{IH}} + (1-\lambda_3) \Gamma^{\text{OH}}.
\end{align*}
The Bayesian optimization method is then performed over the vector $\pmb{\lambda}$ to identify the best combination.

\section{Experiments}
\label{sec:experiments}

\subsection{Setting}

We perform five experiments to evaluate the quality of the budgeted uncertainty sets we construct using the data-driven methods proposed.
In the first experiment, we investigate the relationship between the theoretical guarantee and the actual performance of the proposed algorithm.
In the second experiment, we compare the performance of the iterative algorithm with that of the heuristic algorithms. Since the iterative algorithm can only be applied to small-scale instances, we examine how the heuristic algorithms perform on large-scale instances in the third experiment.
In the fourth experiment, we employ Bayesian optimization to further enhance the results obtained by the heuristic algorithms.
Finally, in the last experiment, we test the algorithms on real-world datasets to verify whether their performance is consistent with the results observed in synthetic experiments.

We make use of the following algorithms. We use the iterative algorithm (see Algorithm~\ref{alg1}), where we also optimize the parameter $\Gamma$ through model~\eqref{model:optG}. In the following, this is denoted by \textbf{IE}. We use the Outer Heuristic presented in Algorithm~\ref{alg:budgeted-set} and the Inner Heuristic presented in Algorithm~\ref{alg:midpoint}. These are abbreviated as \textbf{OH} and \textbf{IH}, respectively. Finally, the Bayesian heuristic is denoted by \textbf{BH}.

In addition, we use a sampling heuristic (\textbf{SH}) as a comparison benchmark. SH starts from a randomly generated feasible solution, where $(\underline{\pmb c},\pmb d)$ are sampled componentwise from a fixed uniform distribution and $\Gamma$ is initialized to $1$. At each iteration, SH draws a candidate from a local neighborhood of the current solution by selecting a random subset of indices and perturbing the corresponding components with clipped Gaussian noise. Specifically, with $\sigma=10$, we update
\[
\underline{\pmb c}\leftarrow \underline{\pmb c}+\mathrm{clip}\!\bigl(\pmb{\epsilon}^{(c)},-\underline{\pmb c},+\infty\bigr), 
\qquad \pmb{\epsilon}^{(c)}\sim\mathcal N(\pmb 0,\sigma^2 I),
\]
\[
\pmb d\leftarrow \pmb d+\mathrm{clip}\!\bigl(\pmb{\epsilon}^{(d)},-\pmb d,+\infty\bigr), 
\qquad \pmb{\epsilon}^{(d)}\sim\mathcal N(\pmb 0,\sigma^2 I),
\]
\[
\Gamma\leftarrow \Gamma+\mathrm{clip}\!\bigl(\epsilon^{(\Gamma)},-\Gamma,\,n-\Gamma\bigr), 
\qquad \epsilon^{(\Gamma)}\sim\mathcal N(0,\sigma^2),
\]
which ensures $\underline{\pmb c}\ge \pmb 0$, $\pmb d\ge \pmb 0$, and $\Gamma\in[0,n]$ after perturbation. If the candidate yields a better guarantee value, it replaces the incumbent; otherwise, the incumbent is retained.

For all experiments except the last, the discrete uncertainty sets are generated from a multivariate normal distribution with a random mean vector and a correlated covariance structure. Specifically, for each dataset we sample a mean vector $\pmb\mu\in\mathbb{R}^n$ componentwise from a uniform distribution, i.e., $\mu_i\sim\mathrm{Unif}(25,125)$ independently for $i=1,\dots,n$. We then generate a random symmetric positive definite matrix $\Sigma_0\in\mathbb{R}^{n\times n}$ using the function \texttt{make\_spd\_matrix} from the Python sklearn package and set the covariance matrix to
\[
\Sigma = s\,\Sigma_0,
\]
where $s>0$ is a scaling factor that controls the dispersion of the generated cost coefficients: larger $s$ leads to more dispersed datasets, while smaller $s$ produces tighter clusters. Here, we set $s=80$. Concretely, we draw $N$ i.i.d.\ samples $\pmb c^{1},\ldots,\pmb c^{N}\sim\mathcal N(\pmb\mu,\Sigma)$ and define the cost vectors by shifting and truncating componentwise:
\[
c^{j}_i=\max\{0,\,c^{j}_i+300\},\qquad i=1,\ldots,n,\ \ j=1,\ldots,N.
\]

Note that our approaches can be used independently from the underlying optimization problem that the uncertainty set affects; that is, our budgeted sets are based only on the available data $\{\pmb{c}^1,\ldots,\pmb{c}^N\}$, and do not require set $\X$. In particular, while constructing sets in the way we propose can take significant computational resources, these sets can then be used for any optimization problem $\X$.
To test the quality of the uncertainty sets we construct, we use the selection problem for $\X$, i.e., we set $\X = \{\pmb{x}\in\{0,1\}^n : \sum_{i\in[n]} x_i = p\}$ for some given integer $p$. For these experiments, we set $p=\left\lfloor n/2 \right\rfloor$.

For the last experiment, we use a real-world dataset that provides traffic data collected from bus drivers in Chicago \cite{chicago_traffic_data_2024}. It contains detailed records of vehicle trajectories, timestamps, and operating conditions, offering a realistic environment for evaluating our algorithms.

In the experiments, we record the total running time of the algorithm, which includes the processes of generating the budgeted set and computing the corresponding $\alpha$ and $\beta$ values.

All experiments were conducted on a high-performance server running Ubuntu~24.04.2~LTS, equipped with an AMD~EPYC~9474F~CPU (48~cores, 96~threads) and 256~GB~RAM. The mixed-integer programming (MIP) models were implemented in Python~3.12.3 and solved using Gurobi~Optimizer~12.0.0.
For consistency, the number of threads used by Gurobi for each model was restricted to~2.
The source code and data are available at \url{https://github.com/qiongyanzeng/discrete_to_budgeted_construction}.

\subsection{Experiment 1: Correlation}

The purpose of this first experiment is to test the relationship between the approximation guarantee we find by evaluating $\alpha$ and $\beta$ for given discrete and budgeted uncertainty sets, and how solutions perform when calculated for these uncertainty sets. That is, we denote by the ``actual'' performance, the cost $\max_{\pmb{c}\in\cU} \pmb{c}^\top\pmb{x}' / \max_{\pmb{c}\in\cU} \pmb{c}^\top \pmb{x}^*$, where $\pmb{x}'$ is an optimal solution for $\cU'$, and $\pmb{x}^*$ is an optimal solution for $\cU$.

In this experiment, we randomly generate 500 datasets for computational analysis. Each dataset is a discrete uncertainty set consisting of $N=50$ cost vectors in dimension $n=10$, generated according to the procedure described in the experimental setting. To construct budgeted uncertainty sets from these discrete datasets, we use algorithm OH as a computationally efficient baseline.

In Figure~\ref{fig:matrix0}, we plot the relationship between the actual performance on the horizontal axis, and the guarantee $\alpha\beta$ on the vertical axis.

\begin{figure}[htbp]
    \centering
    \includegraphics[width=0.6\textwidth]{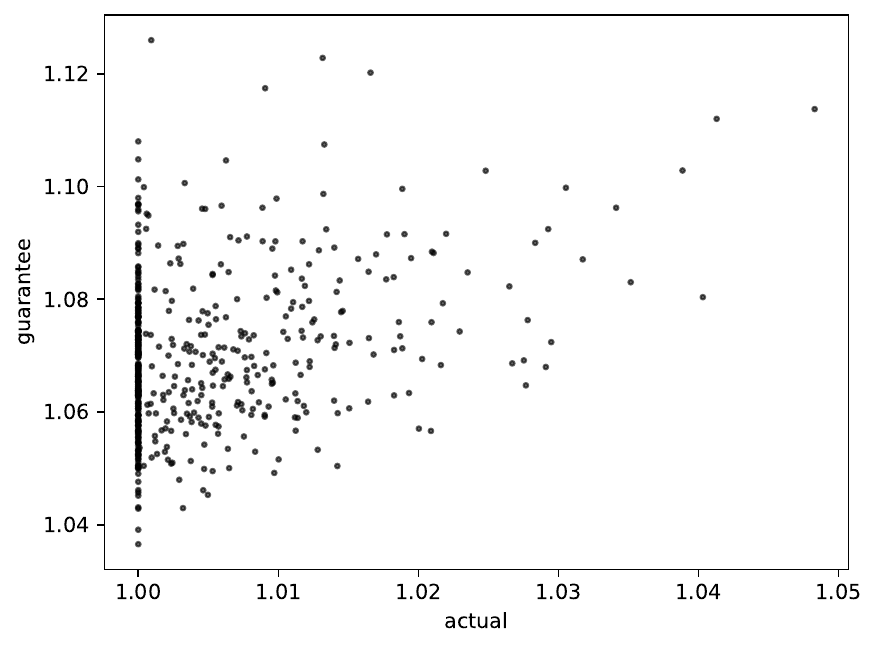}
    \caption{Experiment 1: Relationship between actual performance and guarantee.}
    \label{fig:matrix0}
\end{figure}

As expected, the actual performance is never larger than the given guarantee. Overall, both values are relatively small, with the polynomial-time solutions we find for budgeted uncertainty sets being nearly optimal for the original problem with discrete uncertainty sets. In this case, the correlation between actual performance and guarantee is $0.3279$, which is not strong (in particular, many guarantees are strictly larger than 1, while the solution is actually optimal for the problem). Recall that the guarantees are independent of the problem.

\subsection{Experiment 2: Small-scale instances}

We now compare the performance of the three heuristics IH, OH, and SH to the exact algorithm IE. To ensure that we can find optimal solutions using IE, we restrict the experiments to small datasets, fixing $n = 3$ and varying $N$ within the range $\{2,\ldots, 40\}$.
The time limit is set to 180 seconds for the heuristic algorithms and 7,200 seconds for IE.
For each parameter setting, we generate 10 independent random datasets and report the average results. 

We summarize our findings in Figure~\ref{fig:results2}.

\begin{figure}[htbp]
    \centering

    \begin{subfigure}{0.48\textwidth}
        \includegraphics[width=\textwidth]{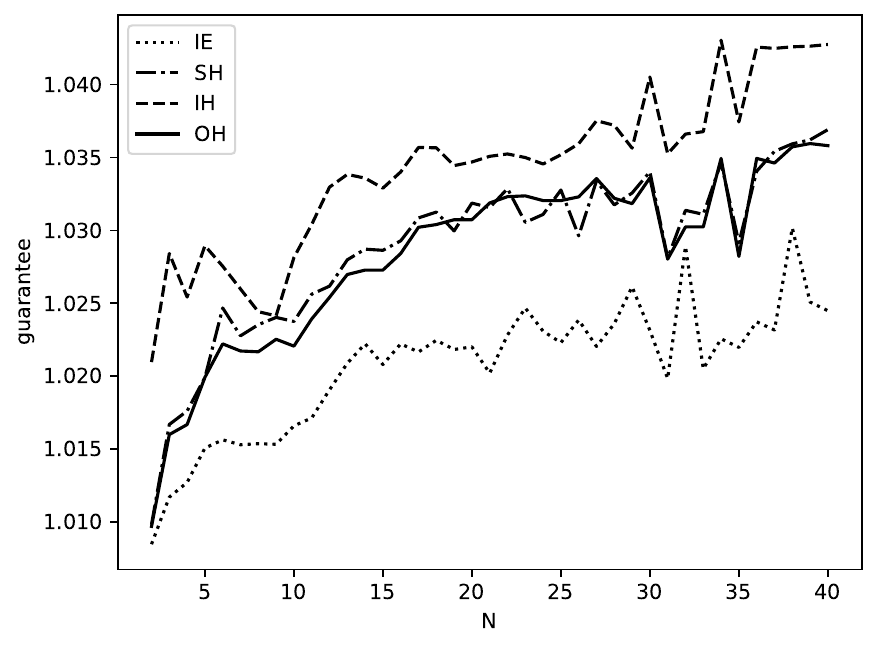}
        \caption{Average guarantee vs. $N$}
        \label{fig:img5}
    \end{subfigure}
    \hfill
     \begin{subfigure}{0.48\textwidth}
        \includegraphics[width=\textwidth]{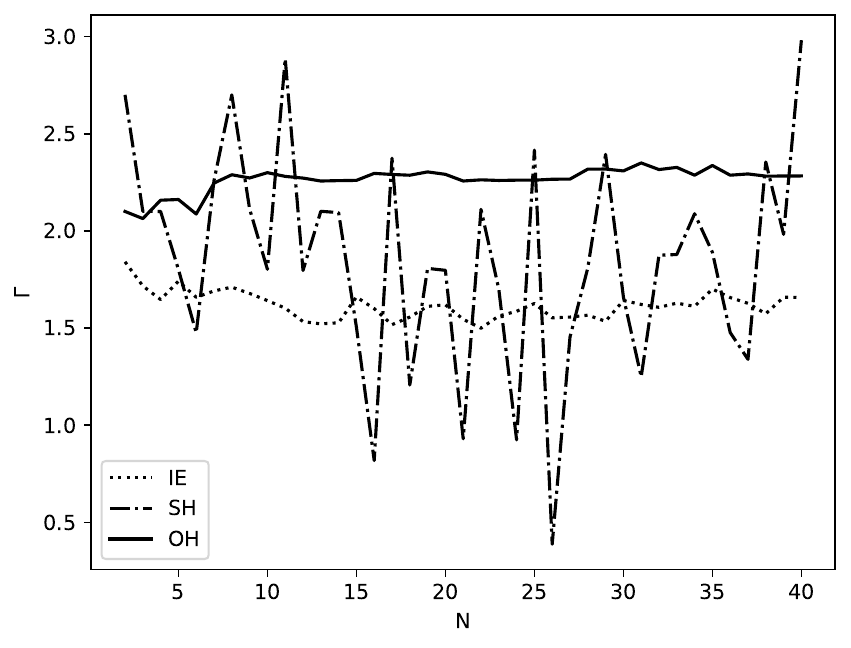}
        \caption{Average $\Gamma$ vs. $N$}
        \label{fig:img6}
    \end{subfigure}

    \begin{subfigure}{0.48\textwidth}
        \includegraphics[width=\textwidth]{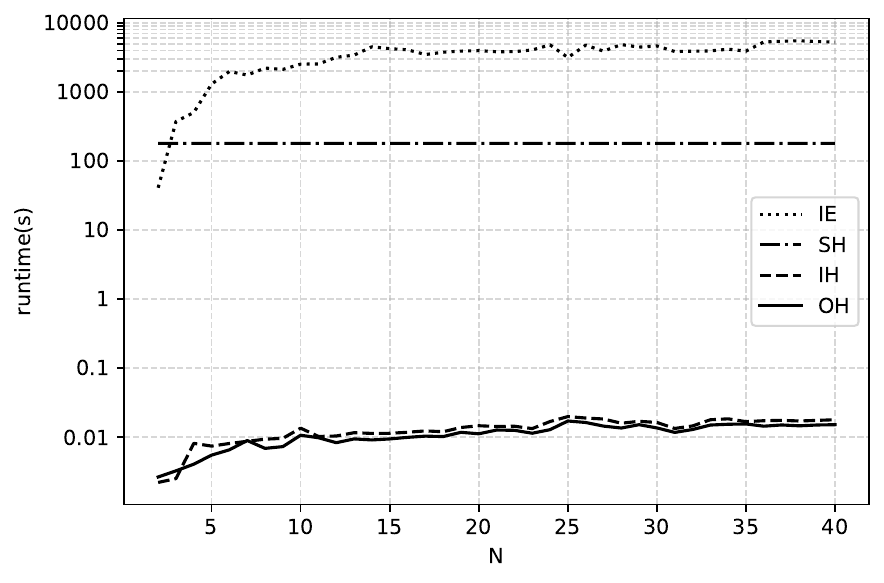}
        \caption{Average runtime vs. $N$}
        \label{fig:img7}
    \end{subfigure}
    \hfill
    \begin{subfigure}{0.48\textwidth}
        \includegraphics[width=\textwidth]{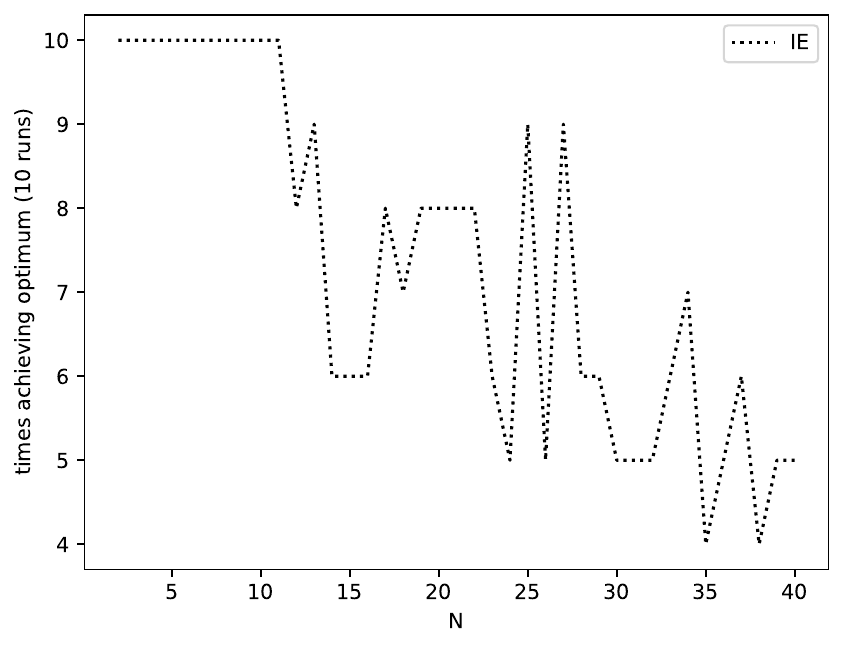}
        \caption{Number of times achieving the optimal solution in 10 repetitions}
        \label{fig:img8}
    \end{subfigure}

        \caption{Experiment 2: Results.}
    \label{fig:results2}
\end{figure}

Regarding the guarantee, we observe that IE performs the best, followed by OH and SH, which perform similarly, while IH performs worst. Moreover, the guarantee tends to increase with $N$ (see Figure~\ref{fig:img5}).

The values of $\Gamma$ show no clear trend with respect to $N$. OH yields larger values of $\Gamma$ than IE for all $N$. In contrast, the values of $\Gamma$ obtained by SH fluctuate considerably more than those of IE and OH (see Figure~\ref{fig:img6}).

Because the differences in computation time are substantial, we present these results on a logarithmic scale (see Figure~\ref{fig:img7}). Since SH has a fixed computation time of 180 seconds, its runtime remains constant.  For the other three algorithms (IE, IH, and OH), runtime increases with $N$, with IE showing the most significant growth. Additionally, the number of times IE achieves the optimal solution in 10 repetitions tends to fluctuate and decrease as $N$ increases (see Figure~\ref{fig:img8}).

Overall, we note that while it is possible to obtain optimal budgeted uncertainty sets in principle, the high computation times show that heuristic algorithms are necessary. Methods OH, IH, and SH provide slightly weaker guarantees than IE, but at a much lower computational cost. While SH achieves guarantees comparable to OH and slightly better than IH, it requires 180 seconds of computation time, whereas the runtimes of OH and IH are negligible.

\subsection{Experiment 3: Large-scale instances}

We now consider larger instances without using IE to analyze the scalability of the heuristic methods.
Here, we fix $n=100$ and vary $N$ in the range $\{5,\ldots, 100\}$, and separately fix $N=100$ while varying $n$ in the same range. For each combination of $N$ and $n$, we randomly generate 100 datasets and compute the average results.

In Figure~\ref{fig:matrix1}, we show the resulting approximation guarantees for IH and OH. The guarantee of SH is substantially worse than those of IH and OH, and therefore not included. As $N$ increases, the guarantee of SH fluctuates around 10, indicating that increasing the number of scenarios has only a limited effect on improving its approximation quality. When $N$ is fixed and $n$ increases, the guarantee of SH rises gradually from approximately 2 to around 10, suggesting that higher dimensionality further degrades the performance of SH and leads to a poorer approximation of the target budgeted set.

\begin{figure}[htbp]
    \centering
    \begin{subfigure}{0.48\textwidth}
        \includegraphics[width=\textwidth]{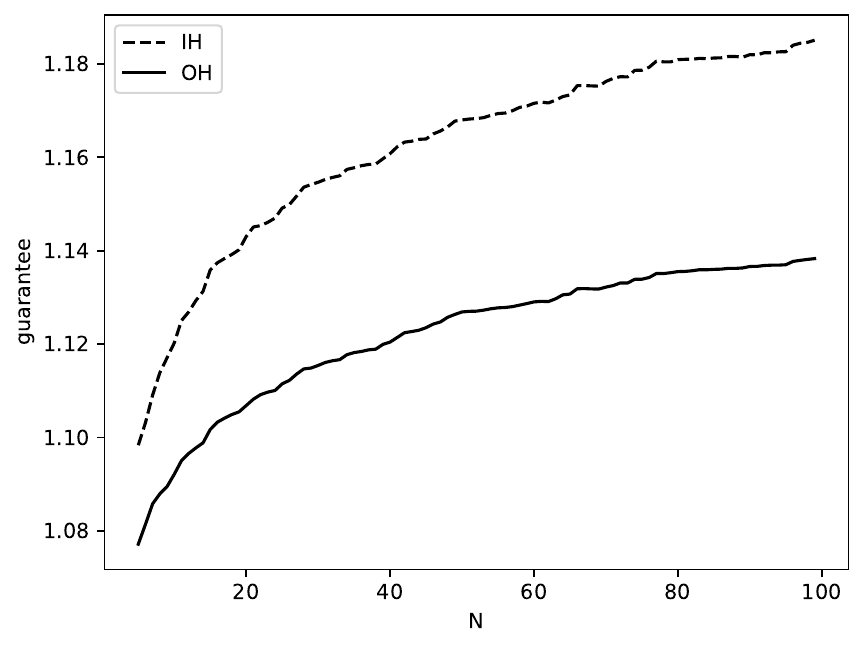}
        \caption{Average Guarantee vs. $N$}
        \label{fig:img9}
    \end{subfigure}
    \hfill
    \begin{subfigure}{0.48\textwidth}
        \includegraphics[width=\textwidth]{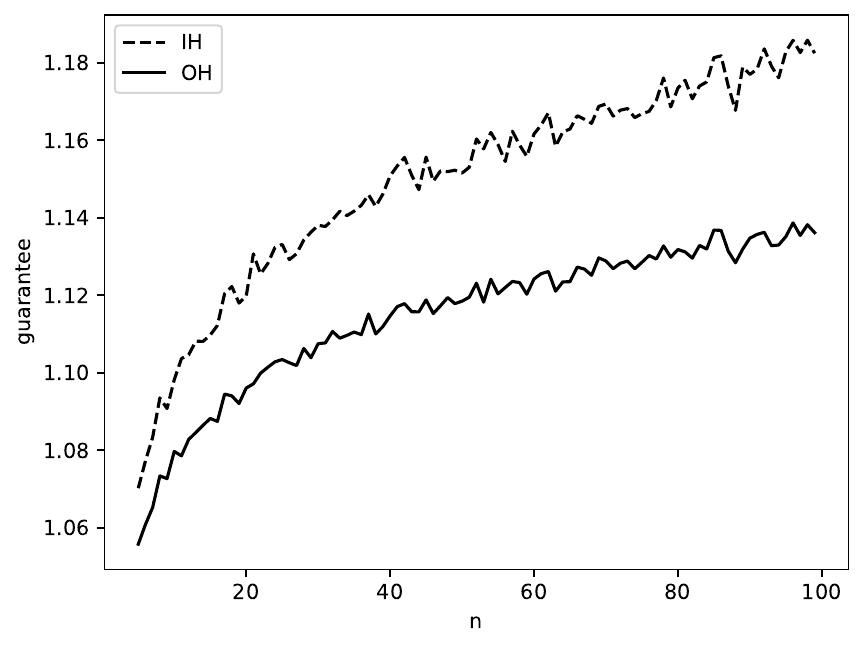}
        \caption{Average Guarantee vs. $n$}
        \label{fig:img10}
    \end{subfigure}
    \caption{Experiment 3: Average guarantees.}
    \label{fig:matrix1}
\end{figure}

In contrast, for both IH and OH, increasing either the dimension $n$ or the number of scenarios $N$ results in larger guarantees, i.e., a weaker approximation quality.

A runtime limit of 180 seconds is imposed on SH, since without a time cap it may continue running for an excessively long time. No time restriction is applied to OH and IH. OH consistently terminates within 21 seconds. For IH, the runtime does not exceed 27.12 seconds when $n=100$, and remains below 190.87 seconds when $N=100$. These results indicate that OH and IH are computationally stable and more efficient than SH. Moreover, comparing IH and OH, OH is slightly better in terms of both the achieved guarantees and the runtime.

\begin{figure}[htbp]
    \centering
    \begin{subfigure}{0.48\textwidth}
        \includegraphics[width=\textwidth]{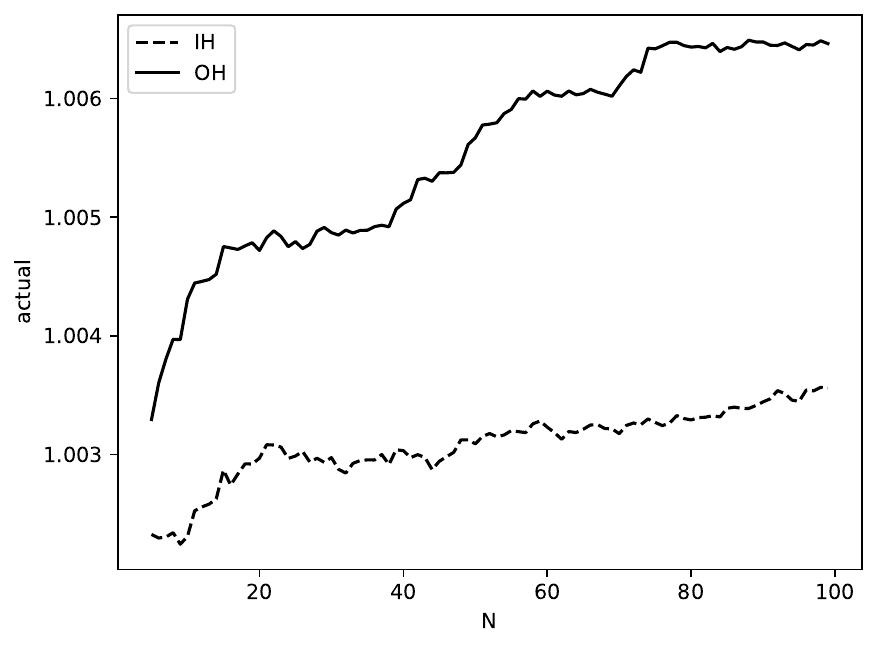}
        \caption{Average actual performance vs. $N$}
        \label{fig:img11}
    \end{subfigure}
    \hfill
    \begin{subfigure}{0.48\textwidth}
        \includegraphics[width=\textwidth]{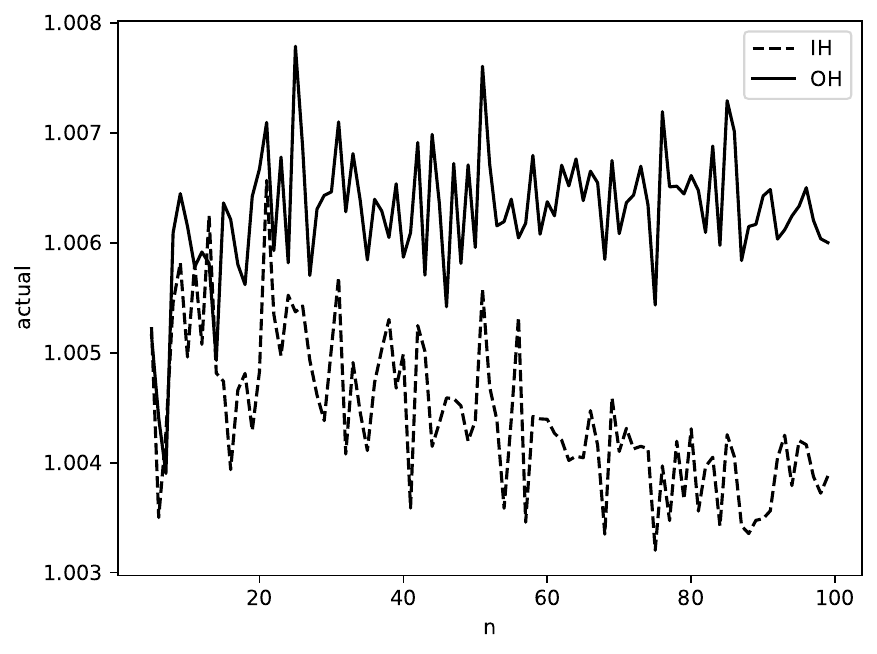}
        \caption{Average actual performance vs. $n$}
        \label{fig:img12}
    \end{subfigure}
    \caption{Experiment 3: Actual performances.}
    \label{fig:matrix2}
\end{figure}

In Figure~\ref{fig:matrix2}, we present actual performances, i.e., the relative objective value of solutions found using budgeted uncertainty sets.
SH achieves a noticeably higher value than both OH and IH, with an average value of approximately 1.08. In contrast, OH and IH exhibit an ordering opposite to that suggested by the guarantee: IH is slightly better than OH in terms of actual performance, although the difference is much smaller than the gap observed in the guarantees.

Overall, IH and OH significantly outperform the baseline SH in terms of guarantee, runtime, and actual performance. Considering these metrics jointly, OH is slightly preferable to IH.

\subsection{Experiment 4: Bayesian heuristic}

We now check the potential of improvement by using the Bayesian heuristic. To this end, we use the BayesianOptimization class from the bayes\_opt package.
As parameters, it takes $\textit{init\_points}$ as the number of random explorations performed before the optimization process begins,
while $\textit{n\_iter}$ is the number of iterations during which the optimizer updates its model and searches for improved solutions.
We set $\textit{init\_points} = 10$ and $\textit{n\_iter} = 50$.

\begin{figure}[htbp]
    \centering
    \begin{subfigure}{0.48\textwidth}
        \includegraphics[width=\textwidth]{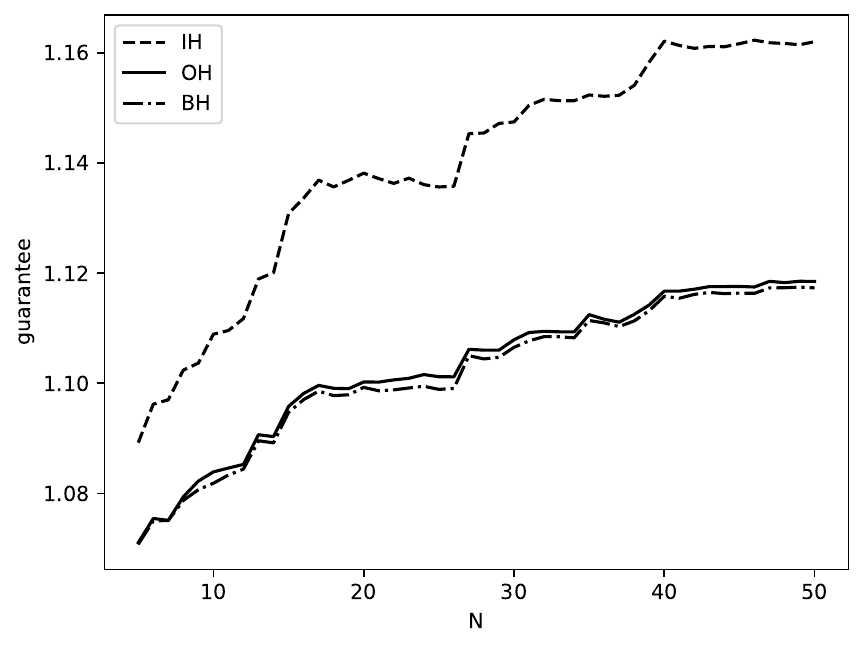}
        \caption{Average Guarantee vs. $N$}
        \label{fig:img13}
    \end{subfigure}
    \hfill
    \begin{subfigure}{0.48\textwidth}
        \includegraphics[width=\textwidth]{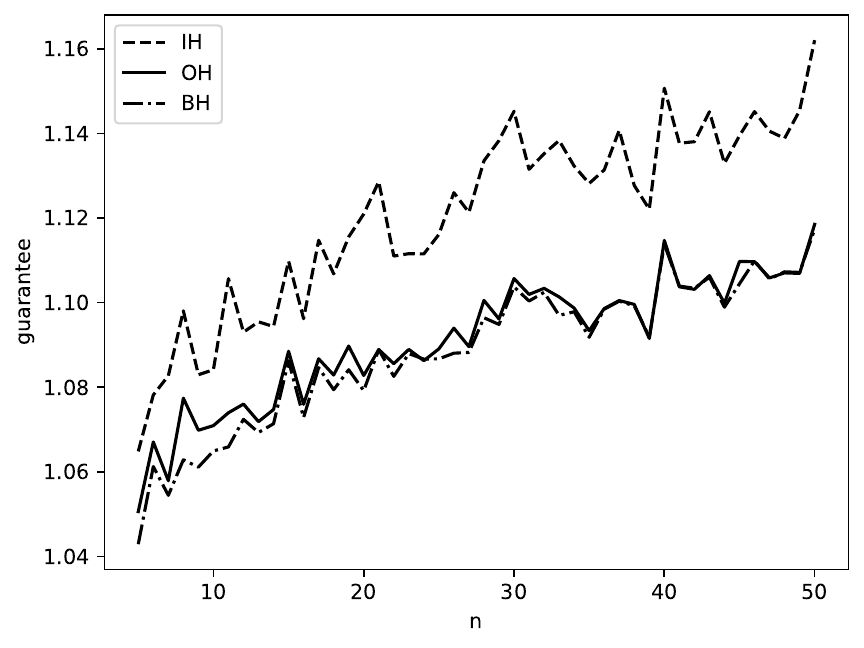}
        \caption{Average Guarantee vs. $n$}
        \label{fig:img14}
    \end{subfigure}
    \caption{Experiment 4: Guarantees.}
    \label{fig:matrix4}
\end{figure}

Figure~\ref{fig:matrix4} reports the resulting guarantees as $N$ and $n$ vary. Overall, Bayesian optimization yields an improvement of less than $0.5\%$ in guarantee relative to OH, suggesting that OH is already close to a locally optimal configuration under our parameterization and that the remaining room for improvement is limited. This marginal gain comes at a much higher computational cost: the average runtime increases to approximately 400.81 seconds per instance, which is substantially larger than that of the heuristics. Consequently, Bayesian optimization does not provide a favorable quality--time trade-off in this setting.

\subsection{Experiment 5: Real-world data}

In this experiment, we use real-world travel time data collected for the Chicago street network with $n=1318$ edges.
We solve the shortest path problem for 10 different $s$--$t$ pairs. For each pair, we construct a scenario set of size $N$ and evaluate all methods on the same scenarios. To ensure comparability across different values of $N$, we use nested scenario sets, i.e., scenarios generated for smaller $N$ are contained in those for larger $N$.
We vary $N$ from 50 to 1000 in steps of 50.
In Figure~\ref{fig:img15}, the values of $\Gamma$ for both IH and SH are fixed at 1.

\begin{figure}[htb]
    \centering
    \begin{subfigure}{0.48\textwidth}
        \includegraphics[width=\textwidth]{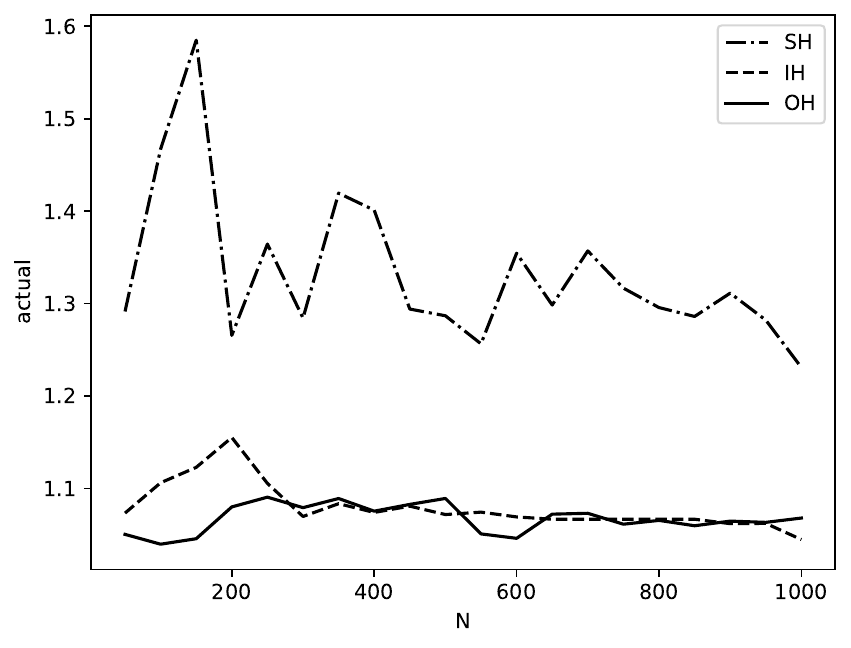}
        \caption{Average $actual$ vs. $N$}
        \label{fig:img15}
    \end{subfigure}
    \hfill
    \centering
    \begin{subfigure}{0.48\textwidth}
        \includegraphics[width=\textwidth]{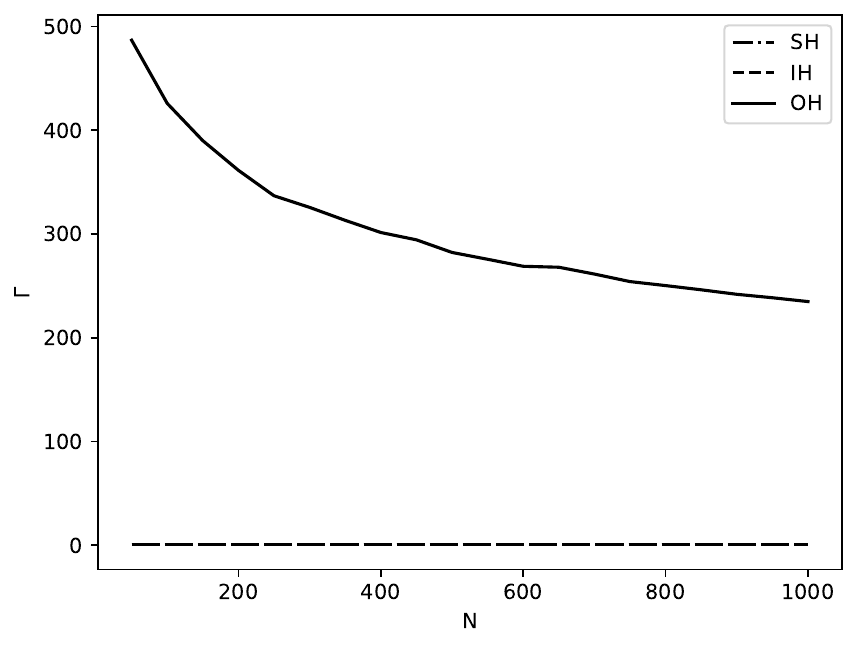}
        \caption{Average $\Gamma$ vs. $N$}
        \label{fig:img17}
    \end{subfigure}
    \hfill
    \begin{subfigure}{0.48\textwidth}
        \includegraphics[width=\textwidth]{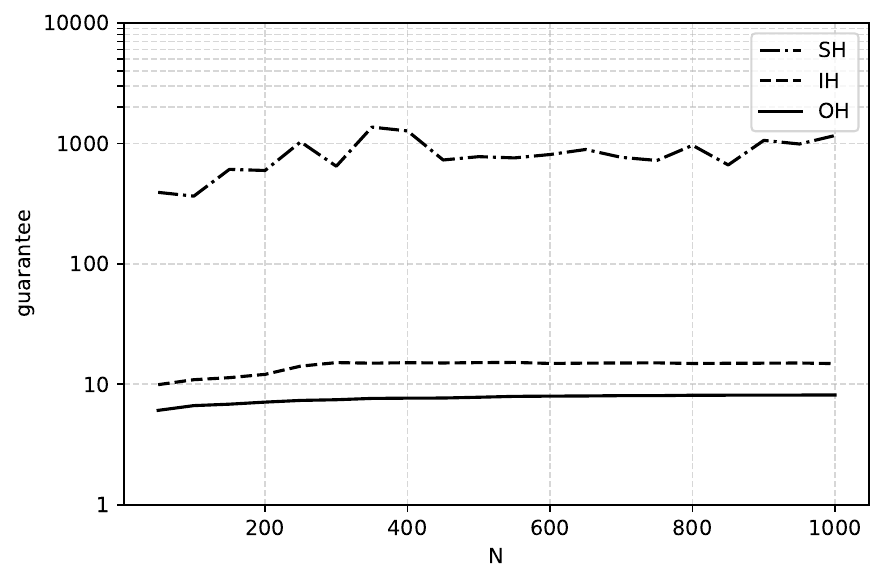}
        \caption{Average $guarantee$ vs. $N$}
        \label{fig:img16}
    \end{subfigure}
    \hfill
    \begin{subfigure}{0.48\textwidth}
        \includegraphics[width=\textwidth]{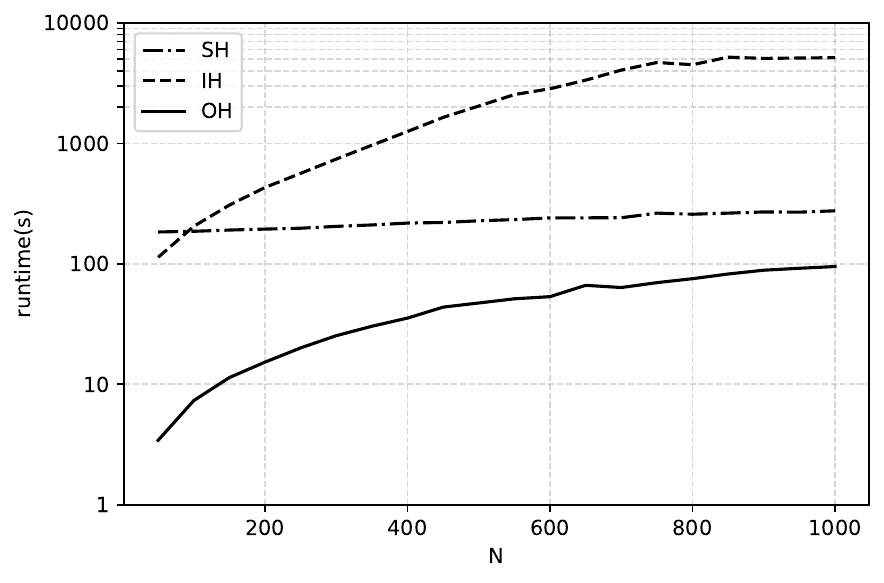}
        \caption{Average Runtime vs. $N$}
        \label{fig:img18}
    \end{subfigure}
    \caption{Experiment 5: Results.}
    \label{fig:matrix5}
\end{figure}

Figure~\ref{fig:matrix5} summarizes the average results over the 10 $s$--$t$ pairs. Overall, the real-data experiments exhibit trends consistent with the synthetic setting. In terms of $guarantee$ (Figure~\ref{fig:img16}), OH achieves the smallest guarantees, followed by IH, whereas SH produces guarantees that are much larger than those of OH and IH, indicating the weakest approximation quality. The learned budget parameter $\Gamma$ is reported in Figure~\ref{fig:img17}. In particular, for IH and SH, $\Gamma$ stays at the prescribed value in Figure~\ref{fig:img15}, i.e., $\Gamma=1$, which yields a controlled comparison between the two constructions.

In terms of $actual$ performance, the ordering differs slightly (Figure~\ref{fig:img15}). SH attains significantly larger $actual$ values, while OH and IH remain close to each other across all tested $N$. Figure~\ref{fig:img18} reports the average runtime as a function of $N$, illustrating the computational cost of each method as the scenario set grows.

Taken together, these results indicate that the relative behavior observed on synthetic instances carries over to the real-world setting: OH and IH deliver substantially stronger guarantees (with OH slightly better than IH), whereas SH yields the weakest guarantees.

\section{Conclusions}
\label{sec:conclusions}

Budgeted uncertainty sets have been one of the most popular choices to model uncertainty in robust optimization due to their favorable trade-off between modeling flexibility and computational tractability. However, observed data is usually discrete, and no principled approaches are known to construct budgeted uncertainty sets from such data. This paper proposes a concept for this purpose, which is purely based on the geometry of data.

We discussed how to calculate an approximation guarantee by comparing given discrete and budgeted uncertainty sets. While the corresponding problem formulations can be solved quickly in practice, we prove that it is NP-hard in general to evaluate this guarantee. On the other hand, if either the number of scenarios $N$ or the budget parameter $\Gamma$ is fixed, it is possible to calculate this guarantee in polynomial time. Due to this hardness result, we proposed an iterative column-and-constraint generation procedure to construct budgeted uncertainty sets that optimize the resulting guarantee. As this exact approach can only be applied to small-scale data, we proposed several heuristics and proved that some of them provide a-priori approximation guarantees. Furthermore, the proposed heuristics scale well and give small optimization guarantees with little computational effort.

When assessing this computational effort, we note that our guarantees hold independent of the underlying nominal optimization problem; hence, once data has been processed into a budgeted uncertainty set, it can be applied to any nominal problem. This means our exact method can be applied in a setting where time is available for preprocessing, potentially saving time when the robust problem needs to be solved by using budgeted instead of discrete uncertainty sets.

In further research, it is an interesting challenge to extend this concept to multi-stage robust optimization problems, where the difference between discrete and continuous budgeted uncertainty sets comes into play. Regarding the theory of our approach, it remains open whether the optimization problem of constructing budgeted uncertainty sets with smallest possible approximation guarantee is in NP or at a higher level of the polynomial complexity hierarchy.

\appendix

\section{Optimizing Discrete Sets}
\label{app:discrete}

To complement our main direction (approximating a discrete uncertainty set by a budgeted set), we also investigate the reverse approximation, i.e., constructing a discrete scenario set that best represents a given budgeted uncertainty set.
Such a representation may be used for the purpose of increased interpretability, or as a starting point for a cutting plane procedure.

\subsection{Extreme points approach}

Consider the budgeted uncertainty set
\[
\cU=\bigl\{\pmb{c}\in\mathbb{R}^n:\ \exists\,\pmb{\delta}\in\Delta_\Gamma\ \text{s.t.}\ c_i=\underline{c}_i+d_i\delta_i,\ i\in[n]\bigr\},
\]
where
\[
\Delta_\Gamma=\bigl\{\pmb{\delta}\in[0,1]^n:\ \sum_{i\in[n]}\delta_i\le \Gamma\bigr\}.
\]
Let $D\subseteq\Delta_\Gamma$ denote all maximal extreme points of this polytope. Note that
\[
|D| \;=\; n\binom{n-1}{\lfloor\Gamma\rfloor}
\;=\; (\lfloor\Gamma\rfloor+1)\binom{n}{\lfloor\Gamma\rfloor+1},
\]
which grows combinatorially with the dimension $n$

We then obtain an approximate discrete set by solving the following model for fixed $N$.
\begin{align*}
\max\ & t \tag{Model 4.1}\\
\text{s.t. } & t(\underline{c}_i + d_i\delta_i) \le \sum_{j\in[N]} \mu^{\pmb{\delta}}_j c^j_i &  \forall \pmb{\delta}\in D, i\in[n] \\
& c^j_i = \sum_{\pmb{\delta} \in D} \lambda^{\pmb{\delta}}_j (\underline{c}_i + d_i\delta^j_i) & \forall j\in[N], i\in[n] \\
& \sum_{j\in[N]} \mu^{\pmb{\delta}}_j = 1 & \forall \pmb{\delta}\in D \\
& \sum_{\pmb{\delta} \in D} \lambda^{\pmb{\delta}}_j = 1 & \forall j \in [N] \\
& \pmb{\mu}^{\pmb{\delta}} \in \{0,1\}^N, \pmb{\lambda}^{\pmb{\delta}}\in[0,1]^N & \forall \pmb{\delta}\in D \\
\end{align*}

\subsection{Experiment}

In this experiment, we employ the extreme-points approach introduced in Section~4.1 to approximate a budgeted uncertainty set by a discrete uncertainty set. We set $n=5$ and $\Gamma=2.5$, for which the number of extreme points on the budget hyperplane is $|D|=30$. We then solve Model~4.1 to select an optimal subset of $D$ of size $N$, and use the corresponding extreme points as the approximating discrete uncertainty set. To demonstrate the effectiveness of our selection procedure, we compare the resulting guarantees with those obtained from discrete sets constructed by randomly selecting $N$ points from $D$.

\begin{figure}[htbp]
    \centering
    \includegraphics[width=0.6\textwidth]{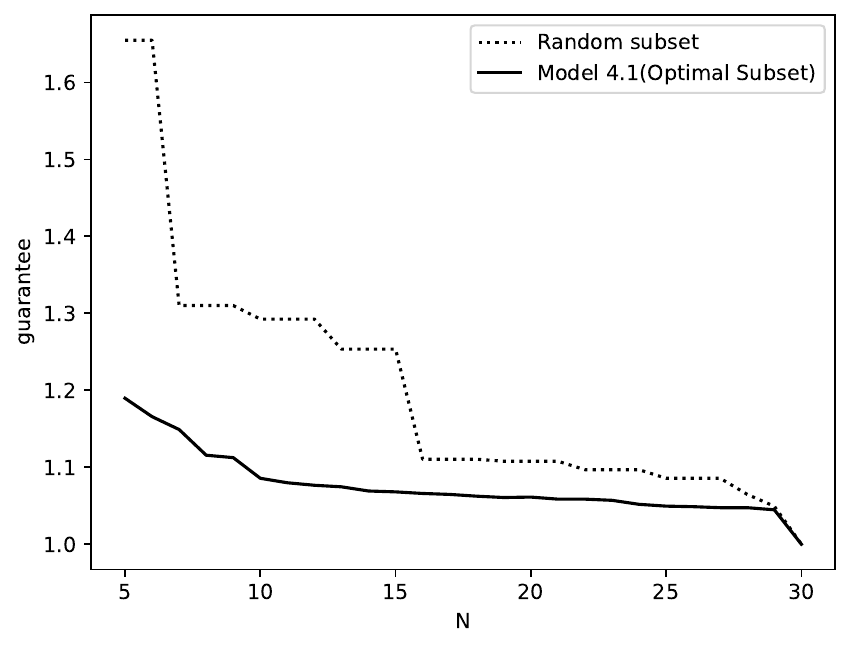}
    \caption{$guarantee$ vs. $N$}
    \label{fig:matrix6}
\end{figure}

Figure~\ref{fig:matrix6} reports the results. The approximating discrete uncertainty set obtained by solving Model~4.1 achieves a noticeably stronger guarantee than the discrete set constructed from a randomly selected subset of $D$. However, as $n$ increases, a full enumeration of $D$ is not practical, and an iterative scheme such as the one presented in Algorithm~\ref{alg1} should be applied.

\end{document}